\documentclass[12pt]{amsart}
\usepackage{hyperref}
\usepackage{amsfonts}
\usepackage{bbm}
\usepackage{esint}
\usepackage{mathrsfs}
\usepackage{amssymb,amsmath,amsfonts,amsthm}
\usepackage{color}
\usepackage{xcolor}
\textwidth =166mm \textheight =230mm \oddsidemargin 2mm
\evensidemargin 2mm \headheight=13pt \setlength{\topmargin}{-0.4cm}

\numberwithin{equation}{section}

\DeclareMathOperator{\supp}{supp}

\allowdisplaybreaks

\baselineskip 16.1pt \hfuzz=6pt

\theoremstyle{plain}
\newtheorem{theorem}{Theorem}[section]

\newcommand{\norm}[1]{\left\Vert#1\right\Vert}

\theoremstyle{definition}
\newtheorem{definition}[theorem]{Definition}

\allowdisplaybreaks

\newcommand{\e}{\varepsilon}

\newtheorem{thm}{Theorem}[section]

\begin{document}

\setcounter{tocdepth}{3}
\allowdisplaybreaks

\title[Commutator of Riesz transforms in the Dunkl setting]
{Riesz transforms and commutators\\ in the Dunkl setting}

\author[Y. Han, M.-Y. Lee, J. Li, and B.D Wick]{Yongsheng Han, Ming-Yi Lee, Ji Li and Brett D. Wick}

\thanks{The second author is supported by MOST 110-2115-M-008-009-MY2. 
The third author is supported by the
Australian Research Council under Grant No.~ARC DP220100285 and NNSF 12171221.  B.D. Wick's research partially supported in part by NSF grant NSF-DMS-1800057 as well as ARC DP190100970.}

\subjclass[2010]{Primary 42B35; Secondary 43A85, 42B25, 42B30}

\keywords{Dunkl Riesz transform, commutator,
BMO, VMO}

\begin{abstract}
In this paper we characterise the optimal pointwise size and regularity estimates for the Dunkl Riesz transform kernel involving both the Euclidean metric and the Dunkl metric, where these two metrics are not equivalent. We further establish a suitable version of the pointwise kernel lower bound of the Dunkl Riesz transform via the Euclidean metric only. Then we show that the lower bound of commutator of the Dunkl Riesz transform is with respect to the BMO space associated with the Euclidean metric, and that the upper bound is respect to  the BMO space associated with the Dunkl metric. Moreover, the compactness and the two types of VMO are also addressed. 
\end{abstract}
\maketitle
\section{Introduction}

The classical Fourier transform, initially defined on $L^1(\mathbb{R}^{N}),$ extends to an isometry of $L^2(\mathbb{R}^{N})$ and commutes with translation, dilation and rotation groups. To study the differential operators associated to reflection groups, Dunkl in \cite{Du1, Du2} introduced a similar transform, the Dunkl transform, which enjoys properties similar to the classical Fourier transform.  The Dunkl transform is given by 
$${ \mathcal{F}_\kappa} f(\xi):=c_\kappa^{-1}\int_{\mathbb{R}^{N}} E(-i\xi, x)f(x){d\omega(x)},$$
where the usual character $e^{-i\langle x,y\rangle}$ is replaced by 
$E(x, y):=\int_{\mathbb R^N} e^{\langle \eta, y\rangle}d\mu_x(\eta)$. Here
$\mu_x$ is a probability measure supported in the convex hull ${\mathcal O}(x)$ of the $G$-orbit of $x$ and the measure $\omega$ are invariant under a finite reflection group $G$ on $\mathbb{R}^{N}$ and 
$c_\kappa=\int_{\mathbb R^N} e^{-\frac{\|x\|^2}2}d\omega(x).$
Corresponding to the Dunkl transform,
 the
Dunkl translation operator $\tau_x$ is defined on  $L^2(\mathbb R^N, d\omega)$  by,
\begin{eqnarray}\label{dutr}
\mathcal{F}_\kappa (\tau_x(f))(y)=
E(ix,y)\mathcal{F}_\kappa f(y), \quad y\in\mathbb{R}^N.
\end{eqnarray}
See also \cite{BCV, deJ, R1, R2, R3, TX1} for more topics related
to the Dunkl setting.

Parallel to classical singular integrals, there is a natural Riesz transform in this Dunkl setting. The case $N=1$, goes back to the work of S. Thangavelu and Y. Xu \cite{TX}, where they established the $L^p$-boundedness of the associated Riesz transform in the Dunkl setting.  This was extended to the case of general dimension $N$ by Amri and Sifi \cite{AS}. See also \cite{DH2,DH4} for singular integrals and multipliers. 

Here we recall the setting of $\mathbb R^N$. 
Consider the Euclidean space $\mathbb{R}^{N}$   equipped with the standard inner product
$\langle x,y\rangle=\sum_{j=1}^Nx_jy_j$
and the corresponding Euclidean norm $\|x\|=\left\{\sum_{j=1}^N|x_j|^2\right\}^\frac{1}{2}.$ Let $B(x,r):=\{y\in \mathbb{R}^{N}:\|x-y\|<r\}$ be the Euclidean ball with center  $ x\in\mathbb{R}^{N}$  and radius   $r>0$.

In $\mathbb R^N$, the reflection $\sigma_\alpha$ with respect to the hyperplane $\alpha_\bot$ orthogonal to a nonzero vector $\alpha$ is given by
$$ \sigma_\alpha(x) = x- 2 { \langle x,\alpha\rangle\over \|\alpha\|^2} \alpha.$$
A finite set $R\subset \mathbb R^N\backslash\{0\}$ is called a \textit{root system} if $\sigma_\alpha(R) = R$ for every $\alpha\in R$.
Let $R$ be a root system in $\mathbb{R}^{N}$ normalized so that $\langle\alpha,\alpha\rangle=2$ {for $\alpha\in R$ and $G$ the finite reflection group generated by the reflections $\sigma_\alpha$ ($\alpha\in R$), where $\sigma_{\alpha}(x)=x-\langle\alpha,x\rangle\alpha$ for $x\in\mathbb{R}^{N}$.}  Corresponding to this reflection group, we denote by $\mathcal{O}(x)$ 
the $G$-orbit of a point
$ x\in\mathbb{R}^{N}$.  There is a natural metric between two $G$-orbits $\mathcal{O}(x)$ and $\mathcal{O}(y)$, given by
$$d(x,y):=\min\limits_{\sigma\in G}\|x-\sigma(y)\|.$$
It is clear that $d(x,y)\leq \|x-y\|$ and it is possible that for certain $x,y\in\mathbb R^N$, $d(x,y)=0$ while $\|x-y\|>0$.

For a \textsl{multiplicity function} $\kappa$ defined on {$R$} (invariant under $G$), let
\begin{eqnarray}\label{measure}
d\omega(x)=\prod_{\alpha\in R}|\langle\alpha,x\rangle|^{\kappa(\alpha)}dx
\end{eqnarray}
 be the associated measure in $\mathbb{R}^{N}$, where, here and subsequently, $dx$ stands for the Lebesgue
measure in $\mathbb{R}^{N}$.

The Dunkl Riesz transforms $R_j$, $j=1,2,\ldots,N$, 
are defined on $L^2(\mathbb R^N, d\omega)$ by
\begin{align}\label{Def Dunkl Riesz 1}
 R_j(f)(x) = d_\kappa \lim_{\epsilon\to0} \int_{|y|>\epsilon} \tau_x(f)(-y) { y_j \over \|y\|^{p_\kappa}} d\omega(y),\quad x\in\mathbb R^N, 
\end{align} 
where 
$d_\kappa = 2^{p_k-1\over2} {\Gamma( {p_\kappa\over2} ) \over\sqrt{\pi}}$, {$p_\kappa = \gamma_\kappa+N+1 $ and $\gamma_\kappa = \sum_{\alpha\in R} \kappa(\alpha)$}.
In \cite{AS} the authors obtained an explicit expression for the kernel ${R_j}(x,y)$ through which \eqref{Def Dunkl Riesz 1} can be represented as 
$$ R_j(f)(x) =\int_{\mathbb R^N} {R_j}(x,y) f(y) d\omega(y).$$  

Indeed, For $x,y\in \mathbb R^N$ and $\eta$ in the convex hull ${\mathcal O}(x)$, set
$A(x,y,\eta)=\sqrt{\|x\|^2+\|y\|^2-2\langle y,\eta\rangle}.$
Denote by
$$K^{(1)}_j(x,y)=\int_{\mathbb R^N} \frac{\eta_j-y_j}{A^{p_\kappa}(x,y,\eta)}d\mu_x(\eta)$$
and 
$$K_j^{(\alpha)}(x,y)=\frac1{\langle y,\alpha\rangle}\int_{\mathbb R^N} 
   \bigg[\frac1{A^{p_\kappa-2}(x,y,\eta)}-\frac1{A^{p_\kappa-2}(x,\sigma_\alpha\cdot y,\eta)}\bigg]d\mu_x(\eta),\quad \alpha\in R_+.$$	
The kernel $R_j(x,y)$ is given by
$$R_j(x,y):=d_\kappa\left\{K^{(1)}_j(x,y) + \sum_{\alpha\in R_+} \frac{\kappa(\alpha)\alpha_j}{p_\kappa-2}K_j^{(\alpha)}(x,y)\right\}.$$   
Moreover, $R_j(x,y)$ satisfies the H\"ormander condition: there exists $C>0$ such that 
$$ \int_{ d(x,y)\geq 2\|y-y_0\| } | {R_j}(x,y) -{R_j}(x,y_0) | d\omega(x)\leq C,\qquad y,y_0\in\mathbb R^N. $$

However, the H\"ormander condition alone is insufficient to bring in recent progress and techniques in harmonic analysis to this Dunkl Riesz transform, such as the sparse domination and sharp quantitative weighted estimate \cite{H,HRT,La,Lerner2}, and the boundedness and compactness of commutators (and its two weight setting) \cite{HLW,LL}.  And since we are after more refined estimates other than just the $L^p$ boundedness, we rectify this lack of information, through the first main result in this paper, the optimal  pointwise size and smoothness estimate of the Riesz transform kernel.
\begin{theorem}\label{smooth r}
There exists a constant $C$ such that for $j=1,2,\ldots,N$ and for every $x,y$ with $d(x,y)\not=0$,
\begin{equation}\label{size}
|R_j(x,y)|\le C\frac{d(x,y)}{\|x-y\|}\frac1{\omega(B(x,d(x,y)))},\hskip7cm
\end{equation}
\begin{equation}\label{smooth y}
|R_j(x,y)-R_j(x,y')|\le C\frac{\|y-y'\|}{\|x-y\|}\frac1{\omega(B(x,d(x,y)))} \qquad {\rm for}\ \|y-y'\|\le  d(x,y)/2,
\end{equation}
\begin{equation}\label{smooth x}
|R_j(x',y)-R_j(x,y)|\le C\frac{\|x-x'\|}{\|x-y\|}\frac1{\omega(B(x,d(x,y)))}\qquad {\rm for}\ \|x-x'\|\le d(x,y)/2.
\end{equation}
\end{theorem}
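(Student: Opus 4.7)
My plan is to analyze the kernel via the Amri--Sifi decomposition $R_j=d_\kappa K^{(1)}_j+d_\kappa\sum_{\alpha\in R_+}\frac{\kappa(\alpha)\alpha_j}{p_\kappa-2}K^{(\alpha)}_j$, treating each piece through Rösler's representation $d\mu_x$. The starting geometric input is the pointwise bound $A(x,y,\eta)\ge d(x,y)$ valid for every $\eta\in\supp\mu_x$: since $\mu_x$ is supported in the convex hull of $\mathcal O(x)$ and $A^2(x,y,\cdot)$ is affine in $\eta$ with $A^2(x,y,\sigma x)=\|y-\sigma x\|^2$, any convex combination $\eta=\sum_i t_i\sigma_i(x)$ yields $A^2=\sum_i t_i\|y-\sigma_i(x)\|^2\ge d(x,y)^2$.

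To reduce the two families of pieces to a common form I would use the identity $(\eta_j-y_j)A^{-p_\kappa}=-(p_\kappa-2)^{-1}\partial_{y_j}A^{-(p_\kappa-2)}$ for $K^{(1)}_j$, and, for $K^{(\alpha)}_j$, apply the fundamental theorem of calculus along the segment from $\sigma_\alpha y$ to $y$: since $y-\sigma_\alpha y=\langle y,\alpha\rangle\alpha$, the apparent singularity $\langle y,\alpha\rangle^{-1}$ cancels and
\[
K^{(\alpha)}_j(x,y)=-(p_\kappa-2)\int_0^1\!\!\int_{\mathbb R^N}A^{-p_\kappa}(x,y_t,\eta)\,\langle\alpha,y_t-\eta\rangle\,d\mu_x(\eta)\,dt,
\]
with $y_t=\sigma_\alpha y+t\langle y,\alpha\rangle\alpha$. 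Both pieces are then dominated by integrals of $A^{-(p_\kappa-1)}$ against $d\mu_x$ (with a $t$-average in the second case). The naive use of $A\ge d(x,y)$ together with the growth $\omega(B(x,r))\gtrsim r^{p_\kappa-1}$ would give only the standard bound $|R_j(x,y)|\lesssim\omega(B(x,d(x,y)))^{-1}$, missing the factor $d(x,y)/\|x-y\|$.

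Producing that extra factor is the main obstacle, and where the novelty of \eqref{size} really lies. I plan to partition the $\eta$-integration according to whether $\eta$ is close to the $G$-element $\sigma_* x$ realizing $\|y-\sigma_* x\|=d(x,y)$. In the near region one sees $|\eta_j-y_j|$ (respectively $|\langle\alpha,y_t-\eta\rangle|$) comparable to $d(x,y)$, which supplies the $d(x,y)$ in the numerator; in the far region, the identity $A^2=\|y-\eta\|^2+(\|x\|^2-\|\eta\|^2)$ together with the distance of $\eta$ from the minimizer forces $A\gtrsim\|x-y\|$, so that one power of $A^{-1}$ is traded for $\|x-y\|^{-1}$. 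In both regions, the $\mu_x$-mass and the volume growth of $\omega$ supply the remaining $\omega(B(x,d(x,y)))^{-1}$ via doubling. Extra care is needed for the $K^{(\alpha)}_j$ terms because $\|x-y_t\|$ can differ from $\|x-y\|$; this is controlled by a $t$-dependent splitting and by re-applying the same convexity lower bound with $y_t$ in place of $y$.

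For the smoothness estimates, \eqref{smooth y} is handled by the mean value theorem along $[y,y']$: under $\|y-y'\|\le d(x,y)/2$ one has $d(x,\tilde y)\sim d(x,y)$ and $\|x-\tilde y\|\sim\|x-y\|$ uniformly on the segment, so it suffices to bound $|\nabla_y R_j(x,\tilde y)|$ pointwise. Differentiating the representations above produces integrals of $A^{-(p_\kappa+1)}$, and a second round of the near/far partition yields the additional $\|x-y\|^{-1}$. For \eqref{smooth x} I would first attempt a reduction to \eqref{smooth y} via the antisymmetry $R_j(y,x)=-R_j(x,y)$ coming from $L^2(d\omega)$-antiself-adjointness of $R_j$, combined with doubling of $\omega$ and the symmetry of $d$; if the asymmetric appearance of $B(x,d(x,y))$ prevents a clean reduction, the alternative is a parallel direct derivation, differentiating the kernel formula in $x$ and controlling the $x$-dependence of $\mu_x$ through smoothness of the Dunkl intertwining operator.
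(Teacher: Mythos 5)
Your approach is genuinely different from the paper's. The paper works entirely with the heat--semigroup representation $R_j(x,y)=-c\int_0^\infty \frac{y_j-x_j}{t}\,h_t(x,y)\,\frac{dt}{\sqrt t}$ and simply plugs in the refined heat-kernel bounds of Dziuba\'nski--Hejna, whose factor $\bigl(1+\|x-y\|/\sqrt t\bigr)^{-2}$ is exactly what trades a power of $A^{-1}$ for $\|x-y\|^{-1}$ after the split $\int_0^{d^2}+\int_{d^2}^\infty$. You instead start from the Amri--Sifi explicit kernel and the R\"osler measures $\mu_x$. Your preliminary observations are correct and worth keeping: the affine formula $A^2(x,y,\eta)=\|x\|^2+\|y\|^2-2\langle y,\eta\rangle$ does give $A(x,y,\eta)\ge d(x,y)$ on $\operatorname{conv}\mathcal O(x)$, the FTC trick to remove the $\langle y,\alpha\rangle^{-1}$ singularity in $K^{(\alpha)}_j$ is the standard cancellation, $A\ge\|y_t-\eta\|$ holds because $\|\eta\|\le\|x\|$ on the support of $\mu_x$, and the antisymmetry $R_j(y,x)=-R_j(x,y)$ together with $G$-invariance of $\omega$ does let you pass from \eqref{smooth y} to \eqref{smooth x}.

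However, there is a genuine gap precisely at the step you yourself flag as ``the main obstacle.'' Your near/far partition in $\eta$ does not, as written, produce the factor $d(x,y)/\|x-y\|$. In the ``far'' region you assert that being far from the minimizing reflection $\sigma_*x$ forces $A(x,y,\eta)\gtrsim\|x-y\|$; this is false in general, because the orbit $\mathcal O(x)$ can contain several points at distance exactly $d(x,y)$ from $y$, and a convex combination concentrated on a different minimizer $\sigma_0x$ still gives $A\approx d(x,y)$, not $\|x-y\|$. In the ``near'' region you bound $|\eta_j-y_j|\lesssim d(x,y)$, but you never explain how $\int A^{-(p_\kappa-1)}\,d\mu_x$ over that region is controlled by $\bigl(\|x-y\|\,\omega(B(x,d(x,y)))\bigr)^{-1}$ rather than merely $\omega(B(x,d(x,y)))^{-1}$; that is exactly the non-trivial $\mu_x$--$\omega$ interaction (the measures $\mu_x$ are not explicit), and Amri--Sifi only ever extract a H\"ormander-type bound this way, not a pointwise one. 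Unless you can repair the far-region claim (e.g.\ by a finer stratification of $\operatorname{conv}\mathcal O(x)$ that accounts for all nearest reflections simultaneously) and quantify the near-region $\mu_x$-integral, the argument does not close. The heat-kernel route avoids this entirely, since the needed geometric content is already encoded in the extra decay factor of Theorem~3.1 of \cite{DH2}.
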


With this result, the door opens to many other questions about the Dunkl Riesz transforms.  It would seem that all properties of Dunkl Riesz transform would become clear since the above pointwise size and smoothness estimates are in the standard form of Calder\'on--Zygmund operators. However, a problem still exists in that there are two different, though related, metrics appearing in the estimates (the same comment holds true for the H\"ormander condition) and these metrics are not equivalent.  Even with these more standard Calder\'on-Zygmund estimates, the Dunkl Riesz transforms does not fall into the classical frame of Calder\'on--Zygmund theory.

A natural question arises: ``{\it What is the right version of the corresponding BMO space in the Dunkl setting?}''   In \cite{Dz}, Dziuba\'nski characterised the {Dunkl} Hardy space (in terms of the Euclidean metric and Dunkl measure $d\omega$)  via the Dunkl Riesz transforms (see also \cite{ADH}). We now investigate the BMO space in this Dunkl setting. A typical question is to consider the BMO space and the commutator of the Dunkl Riesz transform $[b,R_j]$. As is well-known, in the classical setting, Coifman, Rochberg and Weiss \cite{CRW} characterised the boundedness of commutators of Riesz transform via the space of BMO functions. However, due to the conflict of metrics of the size and regularity of the kernel as in Theorem \ref{smooth r}, the approach in \cite{CRW} and the modern methods as in \cite{H2, LOR} do not directly apply.

The second main result of this paper is to establish the link between boundedness of the commutator of the Dunkl Riesz transform $[b,R_j]$ and a corresponding BMO space, and show that the BMO defined via the Euclidean metric ball and the associated measure $d\omega(x)$ is the lower bound
of $[b,R_j]$ and the one with $d(x,y)$ is the upper bound of $[b,R_j]$ in the Dunkl setting. 
Before addressing this, we first investigate the pointwise kernel lower bound for the Dunkl Riesz transform as follows:
\begin{theorem}\label{th pointwise lower}
For $j=1,2,\ldots,N$ and for every ball $B=B(x_0,r)\subset \mathbb R^N$, there is another ball $\widetilde B=B(y_0,r)$ such that $\|x_0-y_0\|=5r$, and that for every  $(x,y)\in B\times \widetilde B$, 
$$|R_j(x,y)|\ge \frac C{\omega(B(x_
     0,r))}.$$
\end{theorem}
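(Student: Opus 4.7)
The plan is to prove Theorem \ref{th pointwise lower} by direct analysis of the explicit kernel formula $R_j(x,y) = d_\kappa\{K^{(1)}_j(x,y) + \sum_{\alpha\in R_+} \frac{\kappa(\alpha)\alpha_j}{p_\kappa-2}K_j^{(\alpha)}(x,y)\}$ on a carefully chosen companion ball $\widetilde{B}$. Heuristically, to match the Theorem \ref{smooth r} upper bound, I need $d(x,y) \asymp \|x-y\| \asymp r$ on $B \times \widetilde{B}$ (so that doubling of $\omega$ gives $\omega(B(x,d(x,y)))^{-1} \asymp \omega(B(x_0,r))^{-1}$), and the principal part of $K^{(1)}_j$ must be actually realized, with the root-sum serving as an admissible perturbation.

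First I would pick $y_0$ on the Euclidean sphere $S(x_0,5r)$ by a spherical-pigeonhole argument. Since $G$ and $R$ are finite, for a small constant $c_0 = c_0(|G|,N) > 0$, the ``bad'' subset of $S(x_0,5r)$ --- those $y_0$ with $\|x_0-\sigma(y_0)\| < c_0 r$ for some $\sigma \neq \mathrm{id}$, or $|\langle y_0,\alpha\rangle| < c_0 r$ for some $\alpha \in R$, or $|(y_0-x_0)_j| < c_0 r$ --- is a union of finitely many spherical caps of controlled total measure, less than the full measure of $S(x_0,5r)$. Picking $y_0$ outside this bad set, for every $(x,y) \in B\times\widetilde{B}$ we obtain $\|x-y\| \asymp 5r$, $\|x-\sigma(y)\| \gtrsim r$ uniformly in $\sigma\ne\mathrm{id}$ (hence $d(x,y)\asymp\|x-y\|$), $|\langle y,\alpha\rangle|\gtrsim r$ for all $\alpha\in R$, and $y_j - x_j \gtrsim r$. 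Doubling of $\omega$ then yields $\omega(B(x,d(x,y)))\asymp\omega(B(x_0,r))$.

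Next I would lower-bound $|K^{(1)}_j(x,y)|$. Using the identity $A(x,y,\eta)^2 = \|y-\eta\|^2 + \|x\|^2 - \|\eta\|^2$ together with $\|\eta\|\leq\|x\|$ (since $\eta$ lies in the convex hull of $\mathcal{O}(x)$), the denominator $A(x,y,\eta)$ admits two-sided bounds by quantities comparable across $\mathrm{supp}\,\mu_x$. Writing $\eta_j - y_j = (\eta_j - x_j) - (y_j - x_j)$ with $y_j - x_j \gtrsim r$, one aims to establish $|K^{(1)}_j(x,y)| \gtrsim 1/\omega(B(x_0,r))$ using the weighted-volume comparison $\omega(B(x_0,r)) \asymp r^N\prod_{\alpha\in R_+}(r+|\langle x_0,\alpha\rangle|)^{\kappa(\alpha)}$. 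The root-sum is handled by a mean-value argument: each $K^{(\alpha)}_j(x,y)$ is a symmetric difference divided by $\langle y,\alpha\rangle$, so applying the mean value theorem to $A^{-(p_\kappa-2)}(x,\cdot,\eta)$ along the segment from $y$ to $\sigma_\alpha y$ (of Euclidean length comparable to $|\langle y,\alpha\rangle|$) cancels the $1/\langle y,\alpha\rangle$ factor and produces a majorant of the same order as $K^{(1)}_j$.

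The main obstacle will be ensuring the correct sign of the principal integral and verifying that the root-sum does not cancel it. The support of $\mu_x$ is the full convex hull of $\mathcal{O}(x)$, which has diameter comparable to $\|x\|$ --- potentially much larger than $r$ when $\|x_0\|\gg r$ --- so the sign of $(\eta_j - y_j)$ is not automatically uniform across that support. Exploiting the explicit weighted structure of $\mu_x$, in particular its concentration behavior near $x$ or the intertwining identities satisfied by integrals of this form, appears essential. Similarly, the accounting of constants in the mean-value bound for the root-sum, leveraging $|\langle y,\alpha\rangle|\gtrsim r$ from Step 1, must be sharp enough so that the total contribution from the finite family $R_+$ stays strictly below the lower bound for $K^{(1)}_j$. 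A successful proof may require refining both the constant $c_0$ and the specific direction of $y_0 - x_0$ beyond the crude conditions imposed above.
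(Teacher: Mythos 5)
There is a genuine gap, and in fact you flag it yourself in your last paragraph: your route through the explicit Amri--Sifi kernel formula has no mechanism to control the sign of the principal integral $K^{(1)}_j$, nor to show that the root-sum terms $K^{(\alpha)}_j$ are strictly subordinate to it. The measure $\mu_x$ is spread over a convex hull whose diameter scales like $\|x\|$, not $r$, so the sign of $\eta_j - y_j$ is genuinely not uniform on $\mathrm{supp}\,\mu_x$ in the regime $\|x_0\|\gg r$, and you have not produced, even schematically, a cancellation-free lower bound for $\left|\int(\eta_j - y_j)A(x,y,\eta)^{-p_\kappa}\,d\mu_x(\eta)\right|$. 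Likewise, the mean-value argument for $K^{(\alpha)}_j$ only gives that these terms are \emph{no larger} than the principal one up to constants, which is not a perturbation estimate; they can be comparable in magnitude, and nothing in the plan rules out partial or full cancellation. So the proposal is not a proof.

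The paper's proof sidesteps all of these difficulties by never touching the explicit $\mu_x$-representation. It uses the heat semigroup formula
\begin{equation*}
R_j(x,y)=-c\int_0^\infty \frac{y_j-x_j}{t}\,h_t(x,y)\,\frac{dt}{\sqrt t},
\end{equation*}
and the pointwise positivity $h_t(x,y)>0$ together with the lower Gaussian bound
\begin{equation*}
h_t(x,y)\ge \frac{1}{C\min\{\omega(B(x,\sqrt t)),\omega(B(y,\sqrt t))\}}\,e^{-c\|x-y\|^2/t}
\end{equation*}
from Theorem \ref{heat kernel}(a) (crucially, with the \emph{Euclidean} distance in the exponent). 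Once $\widetilde B$ is chosen so that $y_j-x_j\ge r$ and $\|x-y\|\approx r$ on $B\times\widetilde B$, the integrand is of one sign, the absolute value can be pushed inside, and splitting at $t=\|x-y\|^2$ (plus doubling) yields the stated $\omega(B(x_0,r))^{-1}$ lower bound. Note also that this argument needs only the two conditions $y_j-x_j\ge r$ and $\|x-y\|\approx r$; the extra genericity you impose on $y_0$ --- such as $d(x,y)\asymp\|x-y\|$ and $|\langle y,\alpha\rangle|\gtrsim r$ for all $\alpha\in R$ --- is unnecessary precisely because the heat-kernel lower bound involves $\|x-y\|$ rather than $d(x,y)$. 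If you want a working proof, the missing idea is to use the semigroup representation and exploit the strict positivity of $h_t$; without that, the sign and cancellation issues you identify remain open and your sketch does not close them.
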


To state our result on commutator, we recall the BMO space in the Dunkl setting as
$$ {\rm BMO}_{Dunkl}(\mathbb R^N) =\{ b\in L^1_{loc}(\mathbb R^N, d\omega): \|b\|_*<\infty \},$$
where 
$$ \|b\|_* = \sup_{B \subset\mathbb R^N} {1\over \omega(B)}\int_B | b(x)-b_B |d\omega(x)<\infty $$
with the supremum is taken over all Euclidean balls $B=B(y,r) =\{ z\in \mathbb R^N: \|z-y\|<r\}$ and 
\begin{equation}\label{mean ball}
b_B  = {1\over \omega(B)}\int_B b(x)d\omega(x).  
\end{equation}
We also recall the ${\rm BMO}_d(\mathbb R^N)$ space associated with $d(x,y)$ as
$${\rm BMO}_d(\mathbb R^N) =\{ b\in L^1_{loc}(\mathbb R^N, d\omega): \|b\|_d<\infty \},$$
where 
$$ \|b\|_d = \sup_{B \in\mathbb R^N} {1\over \omega(\mathcal{O}(B))}\int_{\mathcal{O}(B)} | b(x)-b_{\mathcal{O}(B)} |d\omega(x)<\infty. $$
Note that ${\rm BMO}_d(\mathbb R^N)\subsetneq  {\rm BMO}_{Dunkl}(\mathbb R^N)$ (see for example \cite{JL}).
We have the first main result.
\begin{theorem}\label{commutator}
Suppose  $b\in L^1_{loc}(\mathbb R^N, d\omega)$. Consider the commutator of the Dunkl Riesz transform $[b,R_j]$, defined by
$ [b,R_j](f)(x) = b(x)R_j(f)(x) - R_j(bf)(x). $
Suppose $b\in {\rm BMO}_d$. Then for $1<p<\infty$, $[b,R_j]$ is bounded on $L^p(\mathbb R^N,d\omega)$ with 
$$ \|[b,R_j]\|_{ L^p(\mathbb R^N,d\omega)\to L^p(\mathbb R^N,d\omega) } \lesssim \|b\|_d.$$ 
Conversely, if $[b,R_j]$ is bounded on $L^p(\mathbb R^N,d\omega)$ for some $1<p<\infty$, then $b\in  {\rm BMO}_{Dunkl}(\mathbb R^N)$ with 
$$ \|b\|_* \lesssim  \|[b,R_j]\|_{ L^p(\mathbb R^N,d\omega)\to L^p(\mathbb R^N,d\omega) } . $$
\end{theorem}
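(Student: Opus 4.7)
The plan is to prove the two implications by very different techniques: the upper bound uses the Calder\'on--Zygmund machinery in the metric measure space $(\mathbb R^N,d,d\omega)$, while the lower bound exploits the pointwise kernel lower bound of Theorem~\ref{th pointwise lower} together with the Euclidean smoothness from Theorem~\ref{smooth r} and a bootstrap.

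For the upper bound, view $(\mathbb R^N,d,d\omega)$ as a space of homogeneous type whose natural balls are the orbit sets $\mathcal{O}(B)$ built into the definition of $\|\cdot\|_d$. The size bound \eqref{size} yields the standard Calder\'on--Zygmund size estimate $|R_j(x,y)|\lesssim\omega(\mathcal{O}(B(x,d(x,y))))^{-1}$ in this metric space, and the Euclidean smoothness \eqref{smooth y}--\eqref{smooth x}, although stated in a finer metric than $d$, combines with the orbit-averaged nature of $\mathrm{BMO}_d$ to drive a sharp-maximal estimate of the form
\[
M^{\#}_d\bigl([b,R_j]f\bigr)(x)\;\lesssim\;\|b\|_d\bigl(M_{d,s}f(x)+M_{d,s}(R_jf)(x)\bigr),\qquad 1<s<p,
\]
or, equivalently, a pointwise sparse domination of $[b,R_j]f$ by a sparse operator built from orbit balls. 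Either approach, together with Fefferman--Stein and the $L^p$-boundedness of $R_j$, delivers $\|[b,R_j]\|_{L^p\to L^p}\lesssim\|b\|_d$.

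For the lower bound, fix a Euclidean ball $B=B(x_0,r)$ and let $\widetilde B=B(y_0,r)$ be its companion from Theorem~\ref{th pointwise lower}. On $B\times\widetilde B$ one has $\|x-y\|\ge 3r$, the kernel is continuous, and $|R_j(x,y)|\ge c/\omega(B)$; consequently $R_j$ has constant sign on this connected product set, and we may take $R_j>0$ there after replacing $R_j$ by $-R_j$ if necessary. For a small parameter $\delta>0$ to be chosen, introduce the shrunk test ball $\widetilde B_\delta:=B(y_0,\delta r)\subset\widetilde B$ and write
\[
[b,R_j](\mathbf{1}_{\widetilde B_\delta})(x)\;=\;\bigl(b(x)-b_{\widetilde B_\delta}\bigr)R_j(\mathbf{1}_{\widetilde B_\delta})(x)\;-\;R_j\bigl((b-b_{\widetilde B_\delta})\mathbf{1}_{\widetilde B_\delta}\bigr)(x).
\]
The main factor $R_j(\mathbf{1}_{\widetilde B_\delta})(x)$ is bounded below by $c\,\omega(\widetilde B_\delta)/\omega(B)$ uniformly in $x\in B$. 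Using the cancellation $\int_{\widetilde B_\delta}(b-b_{\widetilde B_\delta})d\omega=0$ to replace $R_j(x,y)$ by $R_j(x,y)-R_j(x,y_0)$ inside the integral, and then invoking the Euclidean smoothness \eqref{smooth y} (which applies because $\|y-y_0\|\le\delta r\le d(x,y)/2$ on $B\times\widetilde B_\delta$ for $\delta$ sufficiently small), the residual term is dominated by $C\delta\cdot(\omega(\widetilde B_\delta)/\omega(B))\cdot\Omega(\widetilde B_\delta)$, where $\Omega(A):=\omega(A)^{-1}\int_A|b-b_A|d\omega$. Solving for $|b(x)-b_{\widetilde B_\delta}|$, averaging over $B$, estimating the commutator term by H\"older and $L^p$-boundedness of $[b,R_j]$, and passing from $b_{\widetilde B_\delta}$ to $b_B$ via the triangle inequality yields the self-referential estimate
\[
\Omega(B)\;\le\;C_\delta\,\|[b,R_j]\|_{L^p\to L^p}\;+\;C_0\,\delta\,\Omega(\widetilde B_\delta).
\]
Setting $M:=\sup_B\Omega(B)$ and fixing $\delta$ so small that $C_0\delta<1/2$, the bound $\Omega(\widetilde B_\delta)\le M$ absorbs the residual and produces $\|b\|_*=M\lesssim\|[b,R_j]\|_{L^p\to L^p}$.

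The main obstacle is the closure of the self-referential $\Omega$-inequality in the lower bound: the coefficient $C_0\delta$ in front of $\Omega(\widetilde B_\delta)$ must be strictly less than one, which forces the use of a \emph{shrunk} test ball of radius $\delta r$ rather than $r$. The shrinking is available precisely because the smoothness \eqref{smooth y} is expressed in the Euclidean distance $\|y-y_0\|$ and is therefore sensitive to the parameter $\delta$, whereas a purely $d$-metric smoothness would be blind to shrinking along orbit directions. The a priori finiteness of $M$ needed for the absorption step can be secured by a routine truncation $b\mapsto\max(-k,\min(b,k))$ followed by passage to the limit, or alternatively by first restricting to balls of bounded radius and then letting the radius tend to infinity; the persistence of the pointwise lower bound and constant sign of $R_j$ on $B\times\widetilde B_\delta\subset B\times\widetilde B$ is automatic.
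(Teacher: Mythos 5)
Your upper bound sketch runs along the same lines as the paper's: a sharp-function estimate built from a local/global split $f=f\chi_{\mathcal O(5B)}+f\chi_{(\mathcal O(5B))^c}$, with $\mathrm{BMO}_d$, the $L^q$ boundedness of $R_j$, and the kernel smoothness doing the work, closed off by Fefferman--Stein. You leave the main technical step unspecified, but the strategy matches.

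The lower bound, however, is a genuinely different route, and it has a concrete gap. You replace the kernel value $R_j(x,y)$ by the difference $R_j(x,y)-R_j(x,y_0)$ inside the integral over $\widetilde B_\delta$ and then invoke the smoothness estimate \eqref{smooth y}. That estimate only holds under the constraint $\|y-y_0\|\le d(x,y)/2$, a constraint in the \emph{Dunkl} metric $d$, not the Euclidean one. You assert this holds ``on $B\times\widetilde B_\delta$ for $\delta$ sufficiently small,'' but nothing in Theorem~\ref{th pointwise lower} (nor elsewhere) guarantees that $d(x,y)\gtrsim r$ uniformly for $x\in B$, $y\in\widetilde B$. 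The companion ball $\widetilde B=B(y_0,r)$ is only pinned down by $\|x_0-y_0\|=5r$ and the direction condition $y_j-x_j\gtrsim r$; since $d\le\|\cdot\|$ and $d$ can vanish where $\|\cdot\|$ does not, $d(x,y_0)$ could in principle be far smaller than $r$ if some $\sigma(y_0)$ with $\sigma\in G\setminus\{I\}$ lands near $B$. To make your argument rigorous you would have to show the companion ball can be chosen with $\mathcal O(y_0)\cap B(x_0,3r)=\emptyset$ (so $d(x,y)\gtrsim r$ on $B\times\widetilde B$), which is an additional geometric lemma you have not supplied. Beyond this, the absorption step needs $M=\sup_B\Omega(B)<\infty$ a priori; your suggested truncation is not automatic, since it is not obvious that $\|[\max(-k,\min(b,k)),R_j]\|_{L^p\to L^p}$ is controlled by $\|[b,R_j]\|_{L^p\to L^p}$.

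The paper's own lower bound avoids both issues entirely. It uses a median-value decomposition (Lerner--Ombrosi--Rivera-R\'ios / Hyt\"onen style): pick $m_b(\widetilde B)$, split $\widetilde B$ into $E_1\cup E_2$ and $B$ into $B_1\cup B_2$ by comparison with $m_b(\widetilde B)$, and observe that on $B_i\times E_i$ both $b(x)-b(y)$ and $R_j(x,y)$ have fixed signs, and $|b(x)-b(y)|\ge|b(x)-m_b(\widetilde B)|$. Testing against $\chi_{E_i}$ then gives the oscillation bound directly from the kernel lower bound of Theorem~\ref{th pointwise lower}, with \emph{no} use of kernel smoothness and \emph{no} self-referential absorption; in particular, no a priori finiteness of a BMO-type supremum is needed. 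That is precisely the advantage of the median trick in this two-metric setting, where the Euclidean H\"older estimate \eqref{smooth y} is available only subject to a $d$-metric proximity condition that is awkward to guarantee near the companion ball.
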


 

With the boundedness of the commutator now completely understood we can additionally consider additional operator theoretic conditions of the commutator.  In particular, we obtain information about the compactness of these commutators.  To do so, we define the VMO space in the Dunkl setting as follows:
$$ {\rm VMO}_{Dunkl}(\mathbb R^N) =\{ b\in {\rm BMO}_{Dunkl}(\mathbb R^N): (1)-(3) \ holds\} $$
where 
$$ (1)\  \lim_{r\to 0}\sup_{B \subset\mathbb R^N, r_B=r} {1\over \omega(B)}\int_{B} | b(x)-b_{B} |d\omega(x)=0, $$
$$ (2)\  \lim_{r\to \infty}\sup_{B \subset\mathbb R^N, r_B=r} {1\over \omega(B)}\int_{B} | b(x)-b_{B} |d\omega(x)=0, $$
$$ (3)\  \lim_{r\to \infty}\sup_{B \subset\mathbb R^N, B\cap B(0,r)=\emptyset } {1\over \omega(B)}\int_B | b(x)-b_B |d\omega(x)=0.$$
We  define the VMO space associated the Dunkl metric as follows:
$$ {\rm VMO}_d(\mathbb R^N) =\{ b\in {\rm BMO}_d(\mathbb R^N): (4)-(6) \ holds\} $$
where 
$$ (4)\  \lim_{r_B\to 0}\sup_{{\mathcal O}(B) \subset\mathbb R^N} {1\over \omega({\mathcal O}(B))}\int_{{\mathcal O}(B)} | b(x)-b_{{\mathcal O}(B)} |d\omega(x)=0, $$
$$ (5)\  \lim_{r_B\to \infty}\sup_{{\mathcal O}(B) \subset\mathbb R^N} {1\over \omega({\mathcal O}(B))}\int_{{\mathcal O}(B)} | b(x)-b_{{\mathcal O}(B)} |d\omega(x)=0, $$
$$ (6)\  \lim_{r\to \infty}\sup_{B \subset\mathbb R^N, {\mathcal O}(B)\cap B(0,r)=\emptyset } {1\over \omega({\mathcal O}(B))}\int_{{\mathcal O}(B)} | b(x)-b_{{\mathcal O}(B)} |d\omega(x)=0.$$

The result we obtain is the following characterisation of compactness and equivalence with VMO, which can be seen as an extension of the work by Uchiyama for the standard Calder\'on--Zygmund operators in the Euclidean setting, see \cite{Uch78}.

\begin{theorem}\label{compact}
Suppose $b\in {\rm BMO}_{Dunkl}(\mathbb R^N)$. If $b\in {\rm VMO}_{d}(\mathbb R^N)$, then for $1<p<\infty$, $[b,R_j]$ is compact on $L^p(\mathbb R^N,d\omega)$.
Conversely, if $[b,R_j]$ is compact on $L^p(\mathbb R^N,d\omega)$, then $b\in {\rm VMO}_{Dunkl}(\mathbb R^N)$.
\end{theorem}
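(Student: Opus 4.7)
My plan is to prove both directions by an Uchiyama-type argument, respecting the metric asymmetry already visible in Theorem~\ref{commutator}: the sufficient condition is phrased in ${\rm VMO}_d$, matching the upper bound, while the necessary condition yields only ${\rm VMO}_{Dunkl}$, matching the lower bound.

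For the sufficient direction, I would first establish that $C_c^\infty(\mathbb R^N)$ is $\|\cdot\|_d$-dense in ${\rm VMO}_d(\mathbb R^N)$, using conditions (4), (5), (6) to respectively mollify at a small Euclidean scale, truncate at a large value, and cut off support far from the origin, each producing arbitrarily small ${\rm BMO}_d$ error. Next I would verify that for each $\varphi\in C_c^\infty(\mathbb R^N)$ the commutator $[\varphi,R_j]$ is compact on $L^p(\mathbb R^N,d\omega)$: the kernel $(\varphi(x)-\varphi(y))R_j(x,y)$ is pointwise dominated by $C\|x-y\|\cdot|R_j(x,y)|\le C/\omega(B(x,d(x,y)))$ via \eqref{size} and decays off a compact set, so a Fréchet--Kolmogorov argument applies once equicontinuity in $L^p$ is extracted from the smoothness estimates \eqref{smooth y} and \eqref{smooth x}. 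The upper bound in Theorem~\ref{commutator} then yields $\|[b,R_j]-[b_n,R_j]\|_{L^p\to L^p}\lesssim\|b-b_n\|_d\to 0$ for any approximating sequence $b_n\to b$ in $\|\cdot\|_d$, so $[b,R_j]$ is a norm limit of compact operators and hence compact.

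For the necessary direction, I argue by contraposition. Compactness implies boundedness, and Theorem~\ref{commutator} then gives $b\in{\rm BMO}_{Dunkl}$. If any of (1)--(3) fails, there is a sequence of Euclidean balls $B_n=B(x_n,r_n)$ with, respectively, $r_n\to 0$, $r_n\to\infty$, or $B_n$ escaping to infinity, such that
\[
\frac{1}{\omega(B_n)}\int_{B_n}|b-b_{B_n}|\,d\omega\ge c>0.
\]
For each $B_n$, Theorem~\ref{th pointwise lower} furnishes a companion ball $\widetilde B_n=B(y_n,r_n)$ with $\|x_n-y_n\|=5r_n$ on which $|R_j(x,y)|\gtrsim 1/\omega(B_n)$ for $(x,y)\in B_n\times\widetilde B_n$; inspection of the explicit formula for $R_j(x,y)$ in this configuration (where $\|x-y\|\asymp r_n$ and the first term $K^{(1)}_j$ dominates) shows that the kernel has constant sign there. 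Setting
\[
f_n(y)=\omega(B_n)^{-1/p}\operatorname{sgn}(b(y)-b_{B_n})\,\mathbf 1_{B_n}(y),
\]
and writing $[b,R_j]f_n=(b-b_{B_n})R_j(f_n)-R_j((b-b_{B_n})f_n)$, the constant-sign lower bound forces $|R_j((b-b_{B_n})f_n)(x)|\gtrsim c\,\omega(B_n)^{-1/p}$ for $x\in\widetilde B_n$, while the first term is controlled on a sub-portion of $\widetilde B_n$ of comparable $\omega$-measure via a John--Nirenberg estimate in ${\rm BMO}_{Dunkl}$. Integrating gives $\|[b,R_j]f_n\|_{L^p}\gtrsim c$. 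After passing to a subsequence so that the $B_n$ are suitably separated in each of the three regimes, $f_n\rightharpoonup 0$ in $L^p$, contradicting compactness.

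The principal obstacle I expect is the necessary direction, specifically upgrading the modulus lower bound in Theorem~\ref{th pointwise lower} to a constant-sign statement on $B_n\times\widetilde B_n$, so that $R_j((b-b_{B_n})\operatorname{sgn}(b-b_{B_n})\mathbf 1_{B_n})(x)$ genuinely captures the oscillation rather than suffering cancellation between the orbit contributions in the explicit kernel. This requires revisiting the Amri--Sifi representation in the configuration $\|x_0-y_0\|=5r$ and verifying that $K^{(1)}_j$ dominates while the sum over $\alpha\in R_+$ preserves the sign. The three VMO failure regimes also each demand a separate weak-convergence argument, handled by extracting a subsequence of $B_n$ that is pairwise far apart in the appropriate asymptotic sense. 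On the sufficient side, the subtle point is Fréchet--Kolmogorov equicontinuity in the presence of the non-doubling measure $\omega$ relative to the Euclidean metric, but the smoothness estimates in Theorem~\ref{smooth r} deliver exactly what is needed.
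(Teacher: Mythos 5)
Your overall architecture matches the paper's: Fr\'echet--Kolmogorov plus density for sufficiency, an Uchiyama-type contradiction for necessity, with the metric asymmetry handled exactly as you describe. The principal differences are in the details of the necessary direction and in one technical point you flag as a serious obstacle that is in fact easy.

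For the sufficient direction, the paper approximates $b$ in $\|\cdot\|_d$ by compactly supported $d$-Lipschitz functions $\Lambda_{d,0}(\mathbb R^N)$, citing \cite{CDLW} for the identification ${\rm VMO}_d = \overline{\Lambda_{d,0}}^{\,\|\cdot\|_d}$, and then runs the Fr\'echet--Kolmogorov criterion, splitting the difference $[b,R_j]f(x)-[b,R_j]f(x+z)$ into four pieces $L_1,\ldots,L_4$ and controlling them via the size/smoothness estimates \eqref{size}--\eqref{smooth x}, the Hardy--Littlewood maximal function on $(\mathbb R^N,d,\omega)$, and the $L^p$-boundedness of the maximal truncated Riesz transform $R_j^\ast$ from \cite{DH4}. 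Your version using $C_c^\infty$-density and the norm-limit argument is the same strategy; you would need to supply the density claim that the paper outsources to \cite{CDLW}, and you must be a bit careful that $R_j^\ast$-boundedness (or an equivalent principal-value estimate) enters when you control the first of your four terms, because $R_j$ is a singular integral and $(\varphi(x)-\varphi(x+z))$ multiplies a genuinely singular operator applied to $f$, not a pointwise-integrable kernel.

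For the necessary direction, the paper follows \cite{LL}: rather than a weak-convergence-to-zero argument, it constructs disjointly supported $f_j$ (after trimming the companion balls $\tilde Q_j$ to $\tilde F_{j,1}:=F_{j,1}\setminus\bigcup_{l>j}\tilde Q_l$, using the geometric decay $4\,l(Q_{j+1})\le l(Q_j)$ to retain $\omega(\tilde F_{j,1})\ge\tfrac16\omega(\tilde Q_j)$) and derives a contradiction from the closure of $\{[b,R_k]f_j\}$ via an $\ell^{p'}\setminus\ell^1$ summability trick. It also uses a median-value splitting ($F_{j,i}$, $E_{j,i}$) rather than the sign function $\operatorname{sgn}(b-b_{B_n})$, which sidesteps the John--Nirenberg sub-portion estimate you invoke to control the first term of the commutator. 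Both are legitimate Uchiyama variants and yield the same conclusion; the paper's route is cleaner because it never has to separate a ``good'' from a ``bad'' portion of $\tilde B_n$.

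The point you identify as the ``principal obstacle''---upgrading the modulus lower bound of Theorem \ref{th pointwise lower} to a constant-sign statement by revisiting the Amri--Sifi formula and arguing that $K_j^{(1)}$ dominates---is not the right path and is not needed. The paper works directly with the heat-kernel representation
$R_j(x,y) = -c\int_0^\infty \frac{y_j - x_j}{t}\,h_t(x,y)\,\frac{dt}{\sqrt t}$,
in which $h_t(x,y)>0$, so $R_j(x,y)$ has a fixed sign on $B\times\widetilde B$ the moment $\widetilde B$ is chosen (as in the proof of Theorem \ref{th pointwise lower}) so that $y_j - x_j\ge r$ for all $(x,y)\in B\times\widetilde B$. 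The constant-sign statement is therefore immediate, with no need to re-examine the Amri--Sifi decomposition into $K_j^{(1)}$ and $K_j^{(\alpha)}$; the paper's Section 4 lower-bound argument already relies on this observation implicitly when it passes from $\big|\int(b(x)-b(y))R_j(x,y)\chi_{E_i}(y)\,d\omega(y)\big|$ to $\int_{E_i}|b(x)-b(y)|\,|R_j(x,y)|\,d\omega(y)$.
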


Based on the properties for the Dunkl transform, the Dunkl Poisson semigroup and the Dunkl Riesz transform, we obtain the upper bound via the Dunkl Poisson extension and Carleson measure estimates. The proof strategy we utilize was used when studying the standard Laplacian and classical Riesz transforms via the Poisson extension in \cite{LS}. The lower bound follows from proving that the Dunkl Riesz transform kernel satisfies the non-degenerate condition (see for example the standard setting \cite{H2,LOR}) via the Euclidean metric.

The paper is organised as follows. We will prove Theorems \ref{smooth r} and  \ref{th pointwise lower} in Section 2. The upper bound for $[b,R_j]$ will be given in Section 3, and then the lower bound for $[b,R_j]$ in Section 4.  Compactness is dealt with in Section 5.  

\section{Proof of Theorems \ref{smooth r} and \ref{th pointwise lower}}

Consider the Euclidean space $\mathbb{R}^{N}$   equipped with the standard inner product
and the corresponding norm.  
Let $B(x,r):=\{y\in \mathbb{R}^{N}:\|x-y\|<r\}$ stand for the ball with center  $ x\in\mathbb{R}^{N}$  and radius   $r>0$.
In $\mathbb R^N$, the reflection $\sigma_\alpha$ with respect to the hyperplane $\alpha_\bot$ orthogonal to a nonzero vector $\alpha$ is given by
$$ \sigma_\alpha(x) = x- 2 { \langle x,\alpha\rangle\over \|\alpha\|^2} \alpha.$$
A finite set $R\subset \mathbb R^N\backslash\{0\}$ is called a root system if $\sigma_\alpha(R) = R$.

Let $R$ be a root system in $\mathbb{R}^{N}$ normalized so that $\langle\alpha,\alpha\rangle=2$ for $\alpha\in R$ and with $R_+$ a fixed positive subsystem, and $G$ the finite reflection group generated by the reflections $\sigma_\alpha$ ($\alpha\in R$). We shall denote by $\mathcal{O}(x)$, resp. $\mathcal{O}(B)$ the G-orbit of a point
$ x\in\mathbb{R}^{N}$, resp. a subset $B\subset \mathbb{R}^{N}$.

We denote by $\textbf{N} = N +\sum\limits_{\alpha\in R}\kappa(\alpha)$
 the homogeneous dimension of the
system. The measure $d\omega$ as in \eqref{measure} satisfies that 
$$\omega(B(tx, tr)) = t^\textbf{N}\omega(B(x, r))$$
and that 
there is a constant $C > 0$ such that
\begin{eqnarray}\label{doubling condition}\omega(B(x, 2r)) \leq C\omega(B(x, r))< \infty\end{eqnarray}
for all $x\in \mathbb{R}^{N}$, $t,r>0$.
Moreover,
\begin{equation}\label{w-double 2}
C^{-1}\bigg(\frac{r_2}{r_1}\bigg)^N\le \frac{w(B(x,r_2))}{w(B(x,r_1))}\le C\bigg(\frac{r_2}{r_1}\bigg)^\textbf{N}
\qquad\text{for}\quad 0<r_1<r_2.
\end{equation}
 and
$$\int_{\mathbb{R}^{N}}f(x)d\omega(x)=\int_{\mathbb{R}^{N}}\frac{1}{t^\textbf{N}}f\Big(\frac{x}{t}\Big)d\omega(x)$$
for $f \in L^1(\mathbb{R}^N, d\omega(x))$, $t>0$. By \eqref{doubling condition}, it is easy
to see $\omega(B(x, \|x-y\|)) \ {\approx}\ \omega(B(y,  \|x-y\|))$.

Recall that $$d(x,y):=\min\limits_{\sigma\in G}\|x-\sigma(y)\|$$
denotes the distance between two G-orbits $\mathcal{O}(x)$ and $\mathcal{O}(y)$. Obviously, 
$$\mathcal{O}(B(x,r))=\bigcup_{\sigma\in G}B(\sigma(x),r)=\{y\in \mathbb{R}^{N}:d(x,y)<r\}$$
and $$\omega(B(x,r))\leq \omega\big(\mathcal{O}(B(x,r))\big)\leq |G|\omega(B(x,r)).$$
See in \cite{ADH, DH2}.

The Dunkl operators $T_\xi$, introduced in \cite{Du1}, is the following $\kappa$-deformation of the directional derivative $\partial_\xi$ by a difference operator:
$$T_\xi f(x)=\partial_\xi f(x)+\sum_{\alpha\in R}\frac {\kappa(\alpha)}2 \langle \alpha, \xi\rangle \frac{f(x)-f(\sigma_\alpha(x))}{\langle \alpha,x\rangle}.$$

For fixed $y\in \mathbb R^N$ the Dunkl kernel $E(x,y)$ is a unique solution of the system
$$T_\xi f=\langle \xi, y\rangle f,\quad f(0)=1.$$
Let $e_j, j=1,\ldots,N$ denote the canonical orthonormal basis in $\mathbb R^N$ 
and let $T_j=T_{e_j}$.  
In particular $$T_{j,x}E(x,y)=y_jE(x,y),$$
where $T_{j,x}$ denotes the action of $T_j$ with respect to the variable $x$.

For $f\in L^1(\mathbb R^N, d\omega)$ (the Lebesgue space with respect to the measure $\omega$)
the Dunkl transform  is defined by
$$\mathcal{F}_k(f)(\xi)=\frac{1}{c_k}
\int_{\mathbb{R}^N}\,E_k(-i\,\xi,x)\!f(x)d\omega(x),\quad c_k\,=\int_{\mathbb{R}^N}\!e^{-\frac{\|x\|^2}2}\,d\omega(x).
$$

The Dunkl translation $\tau_xf$ of a function $f\in {\mathcal S}(\mathbb R^N)$ by $x\in \mathbb R^N$ is defined by
$$\tau_xf(y)=c_k^{-1}\int_{\mathbb R^N} E(i\xi,x)E(i\xi,y)\mathcal{F}_kf(\xi)d\omega(\xi).$$
If $f$ is a  continuous radial function in $L^2(\mathbb{R}^N, \omega)$ with
$f(y)=\widetilde{f}(\norm{y})$, then
$$
              \tau_x(f)(y)=\int_{\mathbb{R}^{N}}\widetilde{f}\bigg(\;\sqrt{\norm{x}^2+\norm{y}^2+2\langle y,\eta\rangle}\;\bigg)d\mu_x(\eta).
$$
This formula is first proved by   M. R\"{o}sler \cite{R2}  for  $f\in \mathcal{S}(\mathbb{R}^N)$  and   recently is
extended to {radial} continuous functions by F. Dai and H. Wang \cite {DW}.
 \par We collect below some useful facts:
  \begin{itemize}
   \item
  [(i)] For all $x,y\in \mathbb{R}^N$,
$$\tau_x(f)(y)=\tau_y(f)(x).$$
   \item  [(ii)]For all $x,\xi\in \mathbb{R}^N$ and $f\in\mathcal{S}(\mathbb{R}^N)$,
   $$T_\xi \tau_x(f)=\tau_x T_\xi(f).$$
   \item [(iii)]
   For all $x\in \mathbb{R}^N$ and $f,\;g\in L^2(\omega)$,
$$\int_{\mathbb{R}^N}\tau_x(f)(-y)g(y)d\omega(y)=\int_{\mathbb{R}^N}f (y) \tau_xg(-y) d\omega(y).
$$

           \item [(iv)] For all $x\in \mathbb{R}^N$ and  $1\leq p\leq2$, the operator $\tau_x$ can be extended to all radial
           functions $f$ in $L^p(\mathbb{R}^N,\omega)$  and the following holds
$$ \|\tau_x(f)\|_{L^p(\omega)}\leq \|f\|_{L^p(\omega)}.$$
  \end{itemize}

The Dunkl Laplacian associated with $G$ and $\kappa$ is the differential-difference operator $\Delta=\sum_{j=1}^N T_j^2$, which acts on $C^2(\mathbb R^N)$-functions by 
$$\Delta f(x)= \vartriangle_{\mbox{eucl}} f(x)+ \sum_{\alpha\in R} \kappa(\alpha)\delta_\alpha f(x),$$
$$\delta_\alpha f(x)=\frac{\partial_\alpha f(x)}{\langle\alpha, x\rangle}-\frac{\|\alpha\|^2}2 \frac{f(x)-f(\sigma x)}{\langle\alpha, x\rangle^2}.$$
The operator $\Delta$ is essentially self-adjoint on $L^2(\mathbb{R}^N, \omega)$. The semigroup has the form
$$H_t(f)(x) = e^{t\Delta} f(x)=\int_{\mathbb R^N} h_t(x,y)f(y)d\omega(y),$$
where the heat kernel
$$h_t(x,y)=\tau_xh_t(-y),\qquad {\rm with}\  h_t(x)=c_\kappa^{-1} (2t)^{-\textbf{N}/2}e^{-\|x\|^2/(4t)},$$
is a $C^\infty$-function of all variables $x,y\in \mathbb R^N, t>0,$ and satisfies
$$0<h_t(x,y)=h_t(y,x), \qquad \int_{\mathbb R^N} h_t(x,y)d\omega(y)=1.$$

Set $$V(x,y,r):=\max\{\omega(B(x,r)),\omega(B(y,r))\}.$$
The following theorem was proved in \cite[Theorem 4.1]{ADH}.
\begin{theorem}[\cite{ADH}]\label{heat kernel}\,

  \begin{itemize}
  \item[(a)] There are constants $C,c>0$ such that
  $${1\over C{\min\{\omega(B(x,\sqrt t)), \omega(B(y,\sqrt t))\}}}e^{-c\|x-y\|^2/t}\leq |h_t(x,y)|\le CV(x,y,\sqrt t)^{-1}e^{-cd(x,y)^2/t},$$
  for every $t>0$ and for every $x,y\in \mathbb R^N$.
  \item[(b)] There are constants $C,c>0$ such that
  $$|h_t(x,y)-h(x,y')|\le C\bigg(\frac{\|y-y'\|}{\sqrt t}\bigg)V(x,y,\sqrt t)^{-1}e^{-cd(x,y)^2/t},$$
  for every $t>0$ and for every $x,y,y'\in \mathbb R^N$ such that $\|y-y'\|<\sqrt t$. 
   \end{itemize}
\end{theorem}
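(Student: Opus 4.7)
The plan is to leverage the explicit formula $h_t(x,y) = \tau_x h_t(-y)$ together with R\"osler's product formula for the Dunkl translation of a radial function. Since $h_t(x) = c_\kappa^{-1}(2t)^{-\textbf{N}/2} e^{-\|x\|^2/(4t)}$ is radial, R\"osler's formula (with $y$ replaced by $-y$) yields
$$h_t(x,y) = c_\kappa^{-1}(2t)^{-\textbf{N}/2}\int_{\mathbb R^N} e^{-(\|x\|^2 + \|y\|^2 - 2\langle y,\eta\rangle)/(4t)}\, d\mu_x(\eta),$$
with $\mu_x$ a probability measure supported in the convex hull of $\mathcal O(x)$. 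All subsequent estimates flow from analyzing the quadratic exponent $A(x,y,\eta)^2 = \|x\|^2 + \|y\|^2 - 2\langle y,\eta\rangle$.

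For the upper bound in (a), the key algebraic inequality is that for every $\eta$ in the convex hull of $\mathcal O(x)$ one has $\|x\|^2 + \|y\|^2 - 2\langle y,\eta\rangle \geq d(x,y)^2$: writing $\eta = \sum_i c_i\sigma_i(x)$ and using $\|\sigma_i(x)\| = \|x\|$, each term in the convex decomposition equals $\|y - \sigma_i(x)\|^2 \geq d(x,y)^2$. This produces the Gaussian factor $e^{-cd(x,y)^2/t}$. To pass from the normalizing constant $(2t)^{-\textbf{N}/2}$ to $V(x,y,\sqrt t)^{-1}$ I would use the density estimate $\omega(B(z,r)) \asymp r^N \prod_{\alpha\in R}(|\langle\alpha,z\rangle| + r)^{\kappa(\alpha)}$ together with the doubling relations \eqref{doubling condition} and \eqref{w-double 2}, absorbing any excess into a slightly worsened Gaussian constant $c$.

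For the lower bound in (a), I would apply Jensen's inequality inside the probability space $(\mathbb R^N, \mu_x)$: convexity of $s\mapsto e^{-s/(4t)}$ yields
$$h_t(x,y) \geq c_\kappa^{-1}(2t)^{-\textbf{N}/2}\exp\!\Big(-\tfrac{1}{4t}\!\int A(x,y,\eta)^2\, d\mu_x(\eta)\Big).$$
The first moment $\overline\eta := \int \eta\, d\mu_x$ can be identified with $x$ by differentiating $E(x,y) = \int e^{\langle \eta, y\rangle}d\mu_x(\eta)$ at $y=0$ and using the Taylor expansion $E(x,y) = 1 + \langle x,y\rangle + O(\|y\|^2)$; this collapses $\int A^2\, d\mu_x$ to the Euclidean square $\|x-y\|^2$ and produces the factor $e^{-\|x-y\|^2/(4t)}$. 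The prefactor $t^{-\textbf{N}/2}$ is converted into $\min\{\omega(B(x,\sqrt t)),\omega(B(y,\sqrt t))\}^{-1}$ via the pointwise bound $\omega(B(z,\sqrt t)) \geq c\, t^{\textbf{N}/2}$ coming from \eqref{w-double 2}, combined with the symmetry $h_t(x,y)=h_t(y,x)$ to recover the maximum of the two reciprocals. For the smoothness in (b), I would differentiate under the integral along the segment from $y$ to $y'$: the $y_j$-derivative brings down $(\eta_j-y_j)/(2t)$, and the elementary estimate $s\,e^{-s^2/(4t)}\lesssim \sqrt t\, e^{-s^2/(8t)}$ (noting $\|\eta-y\|\leq A(x,y,\eta)$ because $\|\eta\|\leq\|x\|$) absorbs $A/t$ into $t^{-1/2}$ times a slightly weaker version of the upper bound of (a); the mean value theorem then delivers the $\|y-y'\|/\sqrt t$ gain under the hypothesis $\|y-y'\| < \sqrt t$.

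The main obstacle will be reconciling the two volume factors with the two distinct Gaussians: upstairs one requires $V(x,y,\sqrt t)^{-1}$ (a smaller quantity) paired with the stronger $d$-Gaussian, while downstairs one requires $\min\{\omega(B(x,\sqrt t)),\omega(B(y,\sqrt t))\}^{-1}$ paired with the weaker Euclidean Gaussian, and both forms must be extracted from the single formal scaling $t^{-\textbf{N}/2}$. Matching these requires the explicit density formula above and careful bookkeeping in the regimes $\sqrt t \ll \|x\|$ versus $\sqrt t \gtrsim \|x\|$, where the factors $\prod_\alpha(|\langle\alpha,z\rangle|+\sqrt t)^{\kappa(\alpha)}$ behave qualitatively differently near the walls of the Weyl chambers. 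A secondary subtlety is the rigorous justification of differentiation under the integral sign and of the identity $\overline\eta = x$, both of which rest on the real-analyticity of $E(x,\cdot)$ and the compact support of $\mu_x$.
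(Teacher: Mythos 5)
This result is quoted from \cite{ADH} (Theorem~4.1 there) and the paper under review does not prove it, so there is no ``paper's own proof'' to compare against; I will therefore assess the proposal on its merits. Your opening steps are sound: R\"osler's radial formula does give
$h_t(x,y)=c_\kappa^{-1}(2t)^{-\mathbf N/2}\int_{\mathbb R^N} e^{-A(x,y,\eta)^2/(4t)}\,d\mu_x(\eta)$,
and your convex-decomposition argument showing $A(x,y,\eta)^2=\sum_i c_i\|y-\sigma_i(x)\|^2\ge d(x,y)^2$ is correct. Your lower bound via Jensen's inequality is also essentially right: the identity $\int\eta\,d\mu_x(\eta)=x$ follows from $E(x,y)=1+\langle x,y\rangle+O(\|y\|^2)$, which collapses $\int A^2\,d\mu_x$ to $\|x-y\|^2$, and since $\omega(B(z,r))\gtrsim r^{\mathbf N}$ for every $z$ (the density vanishes only on the coordinate hyperplanes), the prefactor $t^{-\mathbf N/2}$ does dominate $\min\{\omega(B(x,\sqrt t)),\omega(B(y,\sqrt t))\}^{-1}$.

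The upper bound in (a) (and therefore (b) as well) contains a genuine gap. From $A^2\ge d(x,y)^2$ and $\mu_x(\mathbb R^N)=1$ you get $h_t(x,y)\le c_\kappa^{-1}(2t)^{-\mathbf N/2}\,e^{-d(x,y)^2/(4t)}$; you then propose to trade $t^{-\mathbf N/2}$ for $V(x,y,\sqrt t)^{-1}=\max\{\omega(B(x,\sqrt t)),\omega(B(y,\sqrt t))\}^{-1}$ by ``absorbing any excess into a slightly worsened Gaussian.'' That mechanism fails precisely where the two metrics separate. Take $y=\sigma(x)$ for a nontrivial $\sigma\in G$ with $\|x\|\gg\sqrt t$ and $x$ away from all walls: then $d(x,y)=0$, so the Gaussian factor is identically $1$ and cannot absorb anything, yet $\omega(B(x,\sqrt t))\asymp t^{N/2}\prod_\alpha(|\langle\alpha,x\rangle|+\sqrt t)^{\kappa(\alpha)}\gg t^{\mathbf N/2}$, so the claimed bound forces $t^{-\mathbf N/2}\int e^{-A^2/(4t)}d\mu_x(\eta)\lesssim\omega(B(x,\sqrt t))^{-1}\ll t^{-\mathbf N/2}$. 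The trivial bound $\int e^{-A^2/(4t)}d\mu_x\le 1$ is therefore far from sufficient; one needs a quantitative decay of $\mu_x$ near $\mathcal O(x)$, of roughly the form $\int e^{-A^2/(4t)}d\mu_x(\eta)\lesssim\prod_\alpha\bigl(\sqrt t/(|\langle\alpha,x\rangle|+\sqrt t)\bigr)^{\kappa(\alpha)}$ when $d(x,y)\lesssim\sqrt t$, and this is exactly the hard technical core of the proof in \cite{ADH,DH2}. You flagged this as the ``main obstacle,'' which is correct, but the proposal as written does not supply the missing estimate, and the same issue propagates into your mean-value-theorem argument for part~(b).
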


We now recall the Riesz transforms in the Dunkl setting defined by 
$$\mathcal F_\kappa({R_j f})(\xi)=-i\frac{\xi_j}{\|\xi\|}\mathcal F_\kappa({f})(\xi)\qquad\mbox{for } j=1,2,\cdots,N.$$
Note that 
$$R_j f=-T_j(-\Delta)^{-1/2}f=-C_1\int_0^\infty T_je^{t\Delta} f \frac{dt}{\sqrt t},$$
where the integral converges in $L^2(\mathbb{R}^N,\omega)$ (See \cite[page 2391]{ADH}). 
{In \cite[Lemma 3.3]{DH2}}, for all $x,y\in \mathbb R^N$ and $t>0$,
$$T_jh_t(x,y)=\frac{y_j-x_j}{2t}h_t(x,y).$$
We write the Riesz transforms as follows:
$$R_j f(x)= \int_{\mathbb R^N} R_j(x,y)f(y)d\omega(y),$$
then the kernel $R_j(x,y)$ satisfies the following smoothness condition (1.4)-(1.6).

\begin{proof}[Proof of Theorem \ref{smooth r}]
To estimate the kernel $R_j(x,y),$ we recall the following estimates
for the Dunkl-heat kernel given in \cite[Theorem 3.1]{DH2}

\begin{itemize}
    \item[(a)] There are constants $C,c>0$ such that
    $$|h_t(x,y)|\leqslant C\frac1{V(x,y,\sqrt t)}\bigg(1+\frac{\|x-y\|}{\sqrt t}\bigg)^{-2}e^{-cd(x,y)^2/t},$$
    for every $t>0$ and for every $x,y\in \mathbb R^N$.
    \item[(b)] There are constants $C,c>0$ such that
    $$|h_t(x,y)-h(x,y')|\leqslant C\bigg(\frac{\|y-y'\|}{\sqrt t}\bigg)\frac1{V(x,y,\sqrt t)}\bigg(1+\frac{\|x-y\|}{\sqrt t}\bigg)^{-2}e^{-cd(x,y)^2/t},$$
    for every $t>0$ and for every $x,y,y'\in \mathbb R^N$ such that $\|y-y'\|<\sqrt t$.
\end{itemize}
We now estimate the kernel $R_j(x,y)$ as follows.

\begin{align*}
|R_j(x,y)|&\lesssim |y_j-x_j|\int_0^\infty \frac1{V(x,y,\sqrt t)}\frac t{\|x-y\|^2}e^{-cd(x,y)^2/t}\frac{dt}{t\sqrt t} \\
&\leq \frac1{\|x-y\|}\bigg(\int_0^{d(x,y)^2} + \int_{d(x,y)^2}^\infty\bigg) \frac1{V(x,y,\sqrt t)}e^{-cd(x,y)^2/t}\frac{dt}{\sqrt t}\\
&=: I_1 +I_2.
\end{align*}
For $t\leqslant d(x,y)^2$, by using the doubling condition we have
that
$$\omega(B(x,d(x,y)))\lesssim \Big(\frac {d(x,y)}{\sqrt
t}\Big)^{\mathbf N}\omega(B(x,\sqrt t))$$ and hence
$$
V(x,y,\sqrt t)^{-1}\lesssim \frac1{\omega(B(x,\sqrt t))}\lesssim
\Big(\frac {d(x,y)}{\sqrt t}\Big)^{\mathbf
N}\frac1{\omega(B(x,d(x,y)))}.
$$
We obtain
\begin{align*}
I_1&\lesssim   \frac1{\|x-y\|}\frac1{\omega(B(x,d(x,y)))} \int_0^{d(x,y)^2}\Big(\frac {d(x,y)}{\sqrt t}\Big)^{\mathbf N}e^{-cd(x,y)^2/t}\frac{dt}{\sqrt t}\\
&\lesssim   \frac1{\|x-y\|}\frac1{\omega(B(x,d(x,y)))} \int_0^{d(x,y)^2}\frac {d(x,y)^{\mathbf N}}{t^{\frac{1+\mathbf N}2}}\Big(\frac{t}{d(x,y)^2}\Big)^{\frac{1+\mathbf N}2}dt\\
&\lesssim  \frac{d(x,y)}{\|x-y\|}\frac1{\omega(B(x,d(x,y)))}.
\end{align*}
It is clear that for $t\geqslant d(x,y)^2$, by using the reversed
doubling condition, $$\Big(\frac {\sqrt
t}{d(x,y)}\Big)^N\omega(B(x,d(x,y)))\lesssim C\omega(B(x,\sqrt
t)),$$ we get
\begin{align*}
I_2&\lesssim \frac1{\|x-y\|}\int_{d(x,y)^2}^\infty \frac1{V(x,y,d(x,y))}\frac {d(x,y)^{N}}{t^{\frac{1+ N}2}}dt\\
&\lesssim  \frac{d(x,y)}{\|x-y\|}\frac1{\omega(B(x,d(x,y)))}.
\end{align*}
To see the smoothness estimates, we write

$$|R_j(x,y)-R_j(x,y')|
\le  C|y_j-y_j'| \int_0^\infty |h_t(x,y)|\frac{dt}{t\sqrt t}+|y_j'-x_j|\int_0^\infty |h_t(x,y)-h_t(x,y')|\frac{dt}{t\sqrt t}.$$
By the above method, 
$$|y_j-y_j'| \int_0^\infty |h_t(x,y)|\frac{dt}{t\sqrt t}\lesssim  \frac{\|y-y'\|}{\|x-y\|}\frac1{\omega(B(x,d(x,y)))}.$$
To obtain 
$$|R_j(x,y)-R_j(x,y')|\lesssim \frac{\|y-y'\|}{\|x-y\|}\frac1{\omega(B(x,d(x,y)))}
\quad \mbox{ for } \|y-y'\|<\frac 12d(x,y),$$
it suffices to show that
$$|y_j'-x_j|\int_0^\infty |h_t(x,y)-h_t(x,y')|\frac{dt}{t\sqrt t}\lesssim \frac{\|y-y'\|}{\|x-y\|}\frac1{\omega(B(x,d(x,y)))}\quad \mbox{ for } \|y-y'\|<\frac 12d(x,y).$$
If $\|y-y'\|<\frac 12d(x,y)$, then $\|y'-x\|\le \frac32\|x-y\|$. Hence
\begin{align*}
&|y_j'-x_j|\int_0^\infty |h_t(x,y)-h_t(x,y')|\frac{dt}{t\sqrt t}\\
&\qquad\le C\|x-y\|\int_0^\infty |h_t(x,y)-h_t(x,y')|\frac{dt}{t\sqrt t}\\
&\qquad\le C\|x-y\| \bigg(\int_0^{d(x,y)^2} +
\int_{d(x,y)^2}^\infty\bigg)
|h_t(x,y)-h_t(x,y')|\frac{dt}{t\sqrt t} \\
&\qquad=: I\!I_1 +I\!I_2.
\end{align*}
Note that if $\|y-y'\|<\sqrt t$, then the above condition (b) gives
 $$|h_t(x,y)-h(x,y')|\le C\bigg(\frac{\|y-y'\|}{\sqrt t}\bigg)\frac1{V(x,y,\sqrt t)}\bigg(1+\frac{\|x-y\|}{\sqrt t}\bigg)^{-2}e^{-cd(x,y)^2/t}.$$
If  $\|y-y'\|\ge \sqrt t$, then 
$$|h_t(x,y)-h(x,y')|\le \bigg(\frac{\|y-y'\|}{\sqrt t}\bigg)(|h_t(x,y)|+|h(x,y')|).$$
Since $\|y-y'\|<\frac 12d(x,y)$, we have $d(x,y)\approx d(x,y')$ and $\|x-y\|\approx \|x-y'\|$
and thus
\begin{align*}
I\!I_1&\lesssim  \|y-y'\|\|x-y\|\int_0^{d(x,y)^2}\frac1{V(x,y,\sqrt t)}\frac t{\|x-y\|^2}e^{-cd(x,y)^2/t}\frac{dt}{t^2}\\
&\lesssim \frac{\|y-y'\|}{\|x-y\|}\frac1{\omega(B(x,d(x,y)))} \int_0^{d(x,y)^2}\Big(\frac {d(x,y)}{\sqrt t}\Big)^N e^{-cd(x,y)^2/t}\frac{dt}{t}\\
&\lesssim \frac{\|y-y'\|}{\|x-y\|}\frac1{\omega(B(x,d(x,y)))} \int_0^{d(x,y)^2}\frac {d(x,y)^N}{t^{1+\frac N2}}\Big(\frac{t}{d(x,y)^2}\Big)^{1+\frac{N}2}dt\\
&\lesssim \frac{\|y-y'\|}{\|x-y\|}\frac1{\omega(B(x,d(x,y)))}.
\end{align*}
To estimate $I\!I_2$, we have $\|y-y'\|<\frac 12d(x,y)<\sqrt t$ and the above condition (b) gives
\begin{align*}
I\!I_2&\lesssim \frac{\|y-y'\|}{\|x-y\|}\int_{d(x,y)^2}^\infty
\frac1{V(x,y,\sqrt t)}
e^{-cd(x,y)^2/t}\frac{dt}{t}\\
&\lesssim \frac{\|y-y'\|}{\|x-y\|} \frac1{\omega(B(x,d(x,y)))}\int_{d(x,y)^2}^\infty \frac {d(x,y)^{N}}{t^{1+\frac N2}}dt\\
&\lesssim \frac{\|y-y'\|}{\|x-y\|}\frac1{\omega(B(x,d(x,y)))}.
\end{align*}

\noindent The estimate of the smoothness for $x$ variable is similar. The proof  of Theorem \ref{smooth r} is complete.
\end{proof}

We now prove the pointwise lower bounded of $R_j(x,y)$.

\begin{proof}[Proof of Theorem \ref{th pointwise lower}]
Let $B=B(x_0,r)$.
We choose $\widetilde B=B(y_0, r)$ with $\|x_0-y_0\|=5r$ and satisfy that $y_j-x_j\ge r$ and $\|x-y\|\approx r$ for $x\in B$  and $y\in \widetilde B$. 
Note that
$$R_j(x,y)=-C\int_0^\infty \frac{y_j-x_j}t h_t(x,y)\frac{dt}{\sqrt t}.$$
It is clear that
\begin{align*}
\int_0^\infty \frac1t h_t(x,y)\frac{dt}{\sqrt t}
&\gtrsim \int_0^\infty \frac1t\frac 1{\min\{\omega(B(x,\sqrt t)), \omega(B(y,\sqrt t))\}}e^{-c\|x-y\|^2/t}\frac{dt}{\sqrt t} \\
&= \bigg(\int_0^{\|x-y\|^2} + \int_{\|x-y\|^2}^\infty \bigg)\frac 1{\min\{\omega(B(x,\sqrt t)), \omega(B(y,\sqrt t))\}}e^{-c\|x-y\|^2/t}\frac{dt}{t\sqrt t} \\
&=:A_1+A_2.
\end{align*}
To estimate $A_1$, we use $s=\|x-y\|^2/t$ to get
\begin{align*}
A_1 &\ge \frac 1{\omega(B(x,\|x-y\|))}\int_0^{\|x-y\|^2}e^{-c\|x-y\|^2/t}\frac{dt}{t\sqrt t} \\
       &=\frac 1{\omega(B(x,\|x-y\|))}\|x-y\|^{-1}\int_1^\infty e^{-s}\frac{ds}{\sqrt s}.
\end{align*}
To estimate $A_2$, we use doubling condition to give
$$\omega(x,\sqrt t)\le \frac{t^{\textbf{N}/2}}{\|x-y\|^\textbf{N}}\omega(x,\|x-y\|)$$
and hence
\begin{align*}
A_2 &\ge \frac 1{\omega(B(x,\|x-y\|))}\int_{\|x-y\|^2}^\infty \frac{\|x-y\|^\textbf{N}}{t^{\textbf{N}/2}} e^{-c\|x-y\|^2/t}\frac{dt}{t\sqrt t} \\
       &=\frac 1{\omega(B(x,\|x-y\|))}\|x-y\|^{-1}\int_0^1 e^{-s}{s^{\frac{\textbf{N}+1}2}}ds.
\end{align*}
For $(x,y)\in B\times \widetilde B$, we obtain that
$$|R_j(x,y)|=C\bigg|\int_0^\infty \frac{y_j-x_j}t h_t(x,y)\frac{dt}{\sqrt t}\bigg|
     \gtrsim  \frac 1{\omega(B(x,\|x-y\|))}    \gtrsim  \frac 1{\omega(B(x,r))} \gtrsim  \frac 1{\omega(B(x_
     0,r))}.$$
The proof is completed.
\end{proof}

\section{Proof of Theorem \ref{commutator}: Upper bound of commutator}

The maximal function $Mf$ is defined as
$$Mf(x)=\sup_{x\in B}\frac1{\omega(B)}\int_B |f(y)|d\omega(y).$$
The sharp function $f^\sharp$ is defined as
$$f^\sharp(x)=\sup_{x\in B}\frac1{\omega(B)}\int_B |f(y)-f_B|d\omega(y),$$
where $f_B$ is defined in \eqref{mean ball}.

\begin{proof}[Proof of Theorem \ref{commutator}: upper bound of commutator]
Suppose $b\in BMO_d$, $1<p<\infty$ and $f$ in $L^p(\mathbb R^N,d\omega)$.

For any $x\in \mathbb R^N$ and for any ball $B=B(x_0,r)\subset \mathbb R^N$ containing $x$, we set 
$f = f_1+f_2$ with $f_1 = f\cdot \chi_{\mathcal{O}(5B)}$. 

Then for any $y\in B$, we have that
\begin{align*}
[b,R_j](f)(y) &= b(y) R_j(f)(y) - R_j(bf)(y)\\
&= (b(y)-b_{\mathcal{O}(B)}) R_j(f)(y) - R_j\big(( b-b_{\mathcal{O}(B)})f\big)(y)\\
&= (b(y)-b_{\mathcal{O}(B)}) R_j(f)(y) - R_j\big(( b-b_{\mathcal{O}(B)})f_1\big)(y)- R_j\big(( b-b_{\mathcal{O}(B)})f_2\big)(y)\\
&=: {\rm I}(y)+{\rm I\!I}(y)+{\rm I\!I\!I}(y).
\end{align*}

For  ${\rm I}(y)$ we have that 
\begin{align*}
&{1\over \omega(B)}\int_B |{\rm I}(y) - {\rm I}_B |d\omega(y) \\
&\leq {2\over \omega(B)}\int_B |{\rm I}(y)  |d\omega(y) \\
&={2\over \omega(B)}\int_B \Big|(b(y)-b_{\mathcal{O}(B)}) R_j(f)(y)  \Big |d\omega(y)\\
&\leq 2 \bigg({1\over \omega(B)}\int_B \Big|(b(y)-b_{\mathcal{O}(B)})   \Big |^{s'}d\omega(y)\bigg)^{1\over s'} \bigg({1\over \omega(B)}\int_B \Big|R_j(f)(y)  \Big |^sd\omega(y)\bigg)^{1\over s}\\
&\leq C \bigg({1\over \omega(\mathcal{O}(B))}\int_{\mathcal{O}(B)} \Big|(b(y)-b_{\mathcal{O}(B)})   \Big |^{s'}d\omega(y)\bigg)^{1\over s'} \bigg({1\over \omega(B)}\int_B \Big|R_j(f)(y)  \Big |^sd\omega(y)\bigg)^{1\over s}\\
&\leq C \|b\|_d \Big(M ( |R_jf|^s )(x)\Big)^{1\over s},
\end{align*}
where $s$ is chosen to satisfy $1<s<p<\infty$ and $s'$ is the conjugate of $s$.

For ${\rm I\!I}(y)$, since $R_j$ is bounded on $L^q(\mathbb R^N, d\omega), 1<q<\infty,$
we have
\begin{align*}
&{1\over \omega(B)}\int_B |{\rm II}(y) - {\rm II}_B |d\omega(y)\\
&\leq {2\over \omega(B)}\int_B |{\rm II}(y)  |d\omega(y) \\
&= {2\over \omega(B)}\int_B |R_j\big(( b-b_{\mathcal{O}(B)})f_1\big)(y) |d\omega(y) \\
&\lesssim \bigg({1\over \omega(B)}\int_B |R_j\big(( b-b_{\mathcal{O}(B)})f_1\big)(y) |^q d\omega(y) \bigg)^{\frac1q}\\
&\lesssim \bigg({1\over \omega(B)}\int_{\mathcal{O}(5B)} |b(y)-b_{\mathcal{O}(B)}|^q|f(y) |^q d\omega(y) \bigg)^{\frac1q}\\
&\lesssim \bigg({1\over \omega(B)}\int_{\mathcal{O}(5B)} |b(y)-b_{\mathcal{O}(B)}|^{qv'} d\omega(y) \bigg)^{\frac1{qv'}}
               \bigg({1\over \omega(B)}\int_{\mathcal{O}(5B)} |f(y)|^{qv} d\omega(y) \bigg)^{\frac1{qv}}\\
&\lesssim \|b\|_d\Big(M ( |f|^\beta )(x)\Big)^{1\over \beta},
\end{align*} 
where we have chosen $q,v\in (1,\infty)$ such that $1<qv<p<\infty$ and have set $\beta:=qv$.

Finally, we turn our attention to term ${\rm I\!I\!I}(y)$. For $w\in \mathbb R^N\setminus {\mathcal{O}(5B)}$, it is clear that for $y\in B$, $\|x_0-y\|\le \frac12d(x_0, w)$. Since $\omega(B(w,d(w,x_0)))\approx \omega(B(x_0,d(w,x_0)))$, we have
\begin{align*}
|{\rm I\!I\!I}(y)-{\rm I\!I\!I}(x_0)|
& = |R_j\big(( b-b_{\mathcal{O}(B)})f_2\big)(y)-R_j\big(( b-b_{\mathcal{O}(B)})f_2\big)(x_
0)| \\
&\le \int_{\mathbb R^N\setminus {\mathcal{O}(5B)}} |R_j(w,y)-R_j(w,x_0)||b(w)-b_
{\mathcal{O}(B)}||f(w)|d\omega(w)\\
&\lesssim \int_{\mathbb R^N\setminus {\mathcal{O}(5B)}} \frac{\|y-x_0\|}{\|w-x_0\|}\frac1{\omega(B(w,d(w,x_0)))}|b(w)-b_{\mathcal{O}(B)}
||f(w)|d\omega(w)\\
&\lesssim r\bigg(\int_{\mathbb R^N\setminus {\mathcal{O}(5B)}} \frac1{d(w,x_0)}\frac1{\omega(B(x_0,d(w,x_0)))}|b(w)-b_{\mathcal{O}(B)}|^{s'}d\omega(w)\bigg)^{\frac1{s'}}\\
&\qquad\times \bigg(\int_{\mathbb R^N\setminus {\mathcal{O}(5B)}} \frac1{d(w,x_0)}\frac1{\omega(B(x_0,d(w,x_0)))}|f(w)|^sd\omega(w)\bigg)^{\frac1s},
\end{align*}
where $1<s<p<\infty$.
Hence,
\begin{align*}
&\int_{\mathbb R^N\setminus {\mathcal{O}(5B)}} \frac1{d(x_0,w)}\frac1{\omega(B(x_0,d(w,x_0)))}|f(w)|^sd\omega(w) \\
&\lesssim \sum_{j=0}^\infty \int_{2^j5r\le d(w,x_0)\le 2^{j+1}5r}\frac1{ d(w,x_0)}\frac1{\omega(B(x_0,d(w,x_0)))}|f(w)|^sd\omega(w) \\
&\lesssim \sum_{j=0}^\infty 2^{-j}r^{-1}\frac1{\omega(B(x_0,2^{j+1}5r))}\int_{ d(w,x_0)\le 2^{j+1}5r} |f(w)|^sd\omega(w)\\
&\lesssim r^{-1}M_d ( |f|^s )(x).
\end{align*}
Similarly, by the John-Nirenberg inequality, we have
\begin{align*}
&\int_{\mathbb R^N\setminus {\mathcal{O}(5B)}} \frac1{d(x_0,w)}\frac1{\omega(B(x_0,d(w,x_0)))}|b(w)-b_{\mathcal{O}(5B)}|^{s'}d\omega(w). \\
&\lesssim \sum_{j=0}^\infty 2^{-j}r^{-1}\frac1{\omega(B(x_0,2^{j+1}5r))}\int_{d(w,x_0)\le 2^{j+1}5r} |b(w)-b_{\mathcal{O}(5B)}|^{s'}d\omega(w)\\
&\lesssim r^{-1}\|b\|_d^{s'}.
\end{align*}
Thus,
$$|{\rm I\!I\!I}(y)-{\rm I\!I\!I}(x_0)|\lesssim \|b\|_d\Big(M_d ( |f|^s )(x)\Big)^{\frac1s}.$$
Therefore,
\begin{align*}
{1\over \omega(B)}\int_B |{\rm I\!I\!I}(y) - {\rm I\!I\!I}_B |d\omega(y)
&\leq {2\over \omega(B)}\int_B |{\rm I\!I\!I}(y)-{\rm I\!I\!I}(x_0) |d\omega(y) \\
&\lesssim \|b\|_d\Big(M_d ( |f|^s )(x)\Big)^{\frac1s}.
\end{align*}
By the above estimates we obtain that
$$|([b,R_j]f)^\sharp(x)|\lesssim \|b\|_d\bigg(\Big(M ( |R_jf|^s )(x)\Big)^{1\over s}+ \Big(M ( |f|^\beta )(x)\Big)^{1\over \beta} + \Big(M_d ( |f|^s )(x)\Big)^{\frac1s} \bigg).$$
Since $M, M_d$ and $R_j$ are bounded on $L^p(\mathbb R^N, d\omega)$, we obtain
$$\|[b,R_j]f\|_{L^p(\mathbb R^N, d\omega)}\lesssim \|([b,R_j]f)^\sharp\|_{L^p(\mathbb R^N, d\omega)}\lesssim \|b\|_d\|f\|_{L^p(\mathbb R^N, d\omega)}.$$
The upper bound of commutator is complete.
\end{proof}

\bigskip
\maketitle
\section{Proof of Theorem \ref{commutator}: Lower bound of commutator}

In this section, we want to prove the lower bound of commutator $[b,R_j]$.

\begin{definition}\label{mfb-N}
Let $f$ be finite almost everywhere on $\mathbb R^N$. For $B\subseteq \mathbb R^N$ 
with $\omega(B)<\infty$, we define a median value $m_f(B)$ of $f$ over $B$ to be a real number
satisfying
$$\omega\{x\in B : f(x)>m_f(B)\})\le \frac12\omega(B)\qquad\mbox{and}\qquad 
\omega(\{x\in B : f(x)<m_f(B)\})\le \frac12\omega(B).$$
\end{definition}

\begin{proof}[Proof of Theorem \ref{commutator}: lower bound of commutator]
For given $b\in L^1_{\mbox{loc}}(\mathbb R^N, d\omega)$ and for any ball $B$,
let $\Omega_N(b, B)$ be the oscillation defined by
$$\Omega_N(b, B):= \frac1{\omega(B)}\int_B |b(x)-b_B|d\omega(x),$$
where $b_B$ is the average value of $b$ in $B$. Under the assumption of Theorem \ref{commutator}, we will show that for any ball $B$,
\begin{equation}\label{eq 3.12}
|\Omega_N(b, B)|\lesssim 1.
\end{equation}
Let $B=B(x_0,r)$ with $x_0\in \mathbb R^N$ and $r>0$. Note that
\begin{align*}
[b, R_j]f(x)&= b(x)R_jf(x)-R_j(bf)(x)\\
 &=\int_{\mathbb R^N}  (b(x)-b(y))R_j(x,y)f(y)d\omega(y),
\end{align*}

\noindent where $$R_j(x,y)=-c\int_0^\infty \frac{y_j-x_j}t h_t(x,y)\frac{dt}{\sqrt t}.$$
We choose $\widetilde B=B(\tilde x_0, r)$ such that $y_j-x_j\ge r$ and $\|x-y\|\approx r$ for $x\in B$  and $y\in \widetilde B$. 
Then based on Definition \ref{mfb-N}, 
we now choose two measurable sets
 $$E_1\subset\{y\in \widetilde B : b(y)< m_b(\widetilde B)\}\qquad\mbox{and}\qquad 
    E_2\subset\{y\in \widetilde B : b(y)\ge m_b(\widetilde B)\}$$
such that $\omega(E_i) =  \frac12\omega(\widetilde B), i=1,2$,  and that 
$E_1\cup E_2=\tilde B$, $E_1 \cap E_2 =\emptyset$.

Moreover, we define
$$B_1:=\{x\in B : b(x)\ge m_b(\widetilde B)\}\qquad\mbox{and}\qquad 
    B_2:=\{x\in B : b(x)\le m_b(\widetilde B)\}.$$
Now based on the definition of $E_i$ and $B_i$, for $(x,y)\in B_i\times E_i, i=1,2,$ we have
\begin{align*}
|b(x)-b(y)|&=|b(x)-m_b(\tilde B)+m_b(\widetilde B)-b(y)|\\
   &=|b(x)-m_b(\widetilde B)|+|m_b(\widetilde B)-b(y)|\ge |b(x)-m_b(\widetilde B)|.
\end{align*}
Hence, we have the following facts.
\begin{equation}\label{eq 3.13}
\begin{aligned}
& \mbox{(i) } B=B_1\cup B_2, \widetilde B= E_1\cup E_2 
    \mbox{ and }\omega(E_i)\ge \frac12\omega(\widetilde B), i=1,2;\\
& \mbox{(ii) } b(x)-b(y) \mbox{ does not change sign for all }(x,y)\in B_i\times E_i, i=1,2; \\
&\mbox{(iii) } |b(x)-m_b(\tilde B)|\le |b(x)-b(y)| \mbox{ for all }(x,y)\in B_i\times E_i, i=1,2.
\end{aligned}
\end{equation}
By Theorem \ref{th pointwise lower}, we obtain that, for $(x,y)\in B_i\times E_i, i=1,2$, 
$$|R_j(x,y)|\ge \frac 1{\omega(B(x_
     0,r))}.$$
Let $f_i=\chi_{E_i}, i=1,2.$ 
Then the facts \eqref{eq 3.13} give
\begin{align*}
\frac1{\omega(B)}\sum_{i=1}^2\int_B |[b, R_j]f_i(x)|d\omega(x)
 &\ge \frac1{\omega(B)}\sum_{i=1}^2\int_{B_i} |[b, R_j]f_i(x)|d\omega(x) \\
 &= \frac1{\omega(B)}\sum_{i=1}^2\int_{B_i}\int_{E_i} |b(x)-b(y)||R_j(x,y)| d\omega(y)d\omega(x)\\
 &\gtrsim  \frac1{\omega(B)}\sum_{i=1}^2\int_{B_i}|b(x)-m_b(\widetilde B)|\frac 1{\omega(B(x_
     0,r))}\int_{E_i} d\omega(y)d\omega(x)\\
  &\gtrsim  \frac1{\omega(B)}\sum_{i=1}^2\int_{B_i}|b(x)-m_b(\widetilde B)|d\omega(x)\\
 &\gtrsim |\Omega_N(b, B)|.
\end{align*}
On the other hand, from H\"older's inequality and the boundedness of $[b, R_j]$,
we deduce that
\begin{align*}
\frac1{\omega(B)}\sum_{i=1}^2\int_B |[b, R_j]f_i(x)|d\omega(x)
&\lesssim \frac1{\omega(B)}\sum_{i=1}^2 \bigg(\int_B |[b, R_j]f_i(x)|^pd\omega(x)\bigg)^{1/p}\omega(B)^{1/p'} \\
&\lesssim  \frac1{\omega(B)}\sum_{i=1}^2 \|[b,R_j]\|_{L^p(\mathbb{R}^N, \omega) \to L^p(\mathbb{R}^N,\omega)}\omega(E_i)^{1/p} \omega(B)^{1/p'} .
\end{align*}
Since  $\|x-y\|\approx r$ for $x\in B$  and $y\in \widetilde B$, we have
$\omega(\widetilde B)\lesssim \omega(B)$ and then
$$\frac1{\omega(B)}\sum_{i=1}^2\int_B |[b, R_j]f_i(x)|d\omega(x)
\lesssim  \|[b,R_j]\|_{L^p(\mathbb{R}^N, \omega) \to L^p(\mathbb{R}^N, \omega)}.$$
Therefore,
$$|\Omega_N(b, B)|\lesssim \|[b,R_j]\|_{L^p(\mathbb{R}^N, \omega) \to L^p(\mathbb{R}^N, \omega)}.$$
The proof is complete.
\end{proof}

\section{Proof of Theorem \ref{compact} : The compactness of $[b,R_j]$}

It follows from \cite{CDLW} that the ${\rm VMO}_d(\mathbb R^N)$ are equivalent the 
 ${\rm BMO}_d$-closure of the set $\Lambda_{d,0}(\mathbb R^N)$ of $\Lambda_d (\mathbb R^N)$, the Lipschitz function space on space of homogeneous type $(\mathbb R^N, d, d\omega)$, with the compact support. 

\bigskip
\noindent
{\bf Sufficiency}:

A set $S$ is \emph{precompact} if its closure is compact. A common way to check precompactness is to use the Riesz--Kolmogorov theorem \cite[Theorem 1]{GM}, which we recall in below.
\begin{thm}[\cite{GM}]\label{t-fre kol}(\textbf{Riesz--Kolmogorov theorem})
Let $\mu$ be a doubling measure such that 
$$h(r):=\inf\{\mu(B(x,r)): x\in X\}>0\qquad\text{for each $r>0$}$$
and assume $1<p<\infty$. Let $x_0\in X$, then the subset $E$ of $L^p(X, \mu)$ is relatively compact if and only if
the following conditions are satisfied:

{\rm(a)}\ $E$ is bounded;

{\rm(b)}\ $$\lim_{R\to \infty}\int_{X\setminus B(x_0,R)}|f(x)|^pd\mu(x)=0
\qquad\mbox{uniformly for $f\in E$},$$

{\rm(c)}\  $$\lim_{r\to 0}\int_X |f(x)-f_{B(x,r)}|^pd\mu(x)=0\qquad\mbox{uniformly for $f\in E$}.$$
\end{thm}

Let $X=\mathbb R^N$ and $\mu=\omega$.
Then $(\mathbb R^N, \|\cdot\|, \omega)$ is metric space with doubling measure. 
Note that 
$$\omega(B(x,r))\sim r^N \prod_{\alpha\in R}\big(|\langle\alpha,x\rangle|+r\big)^{\kappa(\alpha)}\ge r^{\bf N}.$$
Thus, we see that  
$\inf\{\omega(B(x,r)): x\in \mathbb R^N\}>0$ for each $r>0$.

We first show that  when $b\in {\rm VMO}_d(\mathbb R^N)$, the commutator $[b, R_j]$ is compact on $L^p(\mathbb R^N)$.
By a density argument, it suffices to show that $[b, R_j]$ is a compact operator for $b\in \Lambda_{d,0}(\mathbb R^N)$.

For $b\in \Lambda_{d,0}(\mathbb R^N)$, to show $[b,R_j]$ is compact on $L^p(\mathbb R^N)$, it suffices to show that for every bounded subset $E\subset L^p(\mathbb R^N)$, the set
$[b,R_j]E$ is precompact. Thus, we only need to show that $[b,R_j)]E$ satisfies the
hypotheses (a)--(c) in the  Riesz--Kolmogorov Theorem (Theorem \ref{t-fre kol}). We first point out that by Theorem \ref{commutator}
and the fact that $b\in {\rm BMO}_d(\mathbb R^N)$, $[b,R_j]$ is bounded on $L^p(\mathbb R^N)$, which implies that $[b,R_j]E$
satisfies hypothesis (a) in Theorem \ref{t-fre kol}.

Next, we will show that~$[b,R_j]E$ satisfies hypothesis (b) in Theorem \ref{t-fre kol}.
We may assume that~$b\in \Lambda_{d,0}(\mathbb R^N)$ with~$\supp b \subset {\mathcal O}(B(0,R))$.
For~$t>2$, set~$K^c := \{x \in \mathbb R^N: d(x,0) > tR\}.$
There exists an increasing function $\phi$ such that $ \{x \in \mathbb R^N: d(x,0) \le tR\}\subseteq
 B(0, \phi(tR))$.  
Then we have
\begin{align*}
  \|[b,R_j]f\|_{L^p(B(0, \phi(tR))^c, d\omega)}
  &\le \|[b,R_j]f\|_{L^p(K^c, d\omega)} 
     =\|bR_j(f) -  R_j(bf)\|_{L^p(K^c, d\omega)}\\
   &\leq  \|bR_j(f)\|_{L^p(K^c, d\omega)} +  \|R_j(bf)\|_{L^p(K^c, d\omega)}.
\end{align*}
Since~$\supp b\cap K^c = \emptyset$,  we have
\[ \int_{d(x,0) > tR}|b R_j(f)(x)|^p \,d \omega(x)  = 0,\]
and so
\begin{equation}\label{eq:thm2.40}
\|[b,R_j]f\|_{L^p(K^c, d\omega)} \leq \|R_j(bf)\|_{L^p(K^c, d\omega)}.
\end{equation}
Using the size condition of $R_j(x,y)$ and the fact that~$\supp b \subset {\mathcal O}(B(0,R))$ we have
\begin{align}\label{eq:thm2.41}
  |R_j(bf)(x)| &\leq  \int_{d(y,0)<R} |R_j(x,y)|| b(y)||f(y)| \,d \omega(y)\nonumber\\
   &\leq   \int_{d(y,0)<R} \frac{1}{\omega(B(x,d(x,y)))}| b(y)||f(y)| d \omega(y).
\end{align}
Notice that for~$d(x,0)>tR$, $t >2$ and~$d(y,0)<R$ 
we have~$d(x,y)>d(x,0)-d(y,0)>d(x,0)/2$.
Using this and H\"{o}lder's inequality, inequality~\eqref{eq:thm2.41} yields
\begin{align*}
  |R_j(bf)(x)|
  &\leq C\frac{1}{\omega(B(x,d(x,0)))} \int_{d(y,0)<R} | b(y)||f(y)| \,d \omega(y) \\
   &\leq  C\frac{1}{\omega(B(x,d(x,0)))}
   \Big(\int_{d(y,0)<R} | b(y)|^{p'} \,d \omega(y)\Big)^{1/p'} \Big(\int_{d(y,0)<R} | f(y)|^{p} \,d \omega(y)\Big)^{1/p}  \\
   &\leq C\frac{1}{\omega(B(0,d(x,0)))} \|b\|_{L^{\infty}(\mathbb R^N)}\|f\|_{L^{p}(\mathbb R^N, d\omega))} \omega({\mathcal O}(B(0,R)))^{1/p'} , 
 \end{align*}
since~$b \in \Lambda_{d,0}(\mathbb R^N)$ and $\omega(B(x,d(x,0)))\approx \omega(B(0,d(x,0)))$. Using this estimate of~$|R_j(bf)(x)|$, \eqref{eq:thm2.40} becomes
\begin{align*}
 &\|[b,R_j]f(x)\|_{L^p(K^c, d\omega)} \\
  &\leq  C\omega(B(0,R))^{1/p'} \|b\|_{L^{\infty}(\mathbb R^N)}\|f\|_{L^{p}(\mathbb R^N, d\omega))}  \Big( \int_{d(x,0)>tR} \frac{1}{{\omega(B(0,d(x,0)))^p}} \,d\omega(x)\Big)^{1/p}\\
 &\leq  C\omega(B(0,R))^{1/p'} \|b\|_{L^{\infty}(\mathbb R^N)}\|f\|_{L^{p}(\mathbb R^N, d\omega))}  \Big(\sum_{j=0} ^\infty\int_{2^{j}tR<d(x,0)<2^{j+1}tR} \frac{1}{{\omega(B(0,d(x,0)))^p}} \,d\omega(x)\Big)^{1/p}\\
 &\le C\omega(B(0,R))^{1/p'} \|b\|_{L^{\infty}(\mathbb R^N)}\|f\|_{L^{p}(\mathbb R^N, d\omega))} \sum_{j=0}^\infty \frac{\omega(B(0,2^{j+1}tR))^{1/p}}{\omega(B(0,2^jtR))}\\
 &\le C\|b\|_{L^{\infty}(\mathbb R^N)}\|f\|_{L^{p}(\mathbb R^N, d\omega))}
 2^{N/p}t^{-N/p'}\sum_{j=1}^\infty 2^{-Nj/p'},
\end{align*}
where the last inequality follows from \eqref{w-double 2}.
Finally, given each~$\e > 0$, we can choose~$t$ large enough such that~$C2^{N/p}t^{-N/p'\sum_{j=1}^\infty 2^{-Nj/p'}}< \e$. Here the constant~$C$ depends on~$b$ and on the bound on~$\|f\|_{L^{p}(\mathbb R^N, d\omega))}$ for~$f \in E$.
Hence hypothesis (b)  in Theorem \ref{t-fre kol} holds for $[b,R_j]E$.

It remains to prove that $[b,R_j]E$ also satisfies hypothesis (c) of Theorem \ref{t-fre kol}.
Let $\e$ be a fixed positive constant in $(0,1/8)$.
Since~$b \in \Lambda_{d,0}({\mathbb R}^N)$, $b$ is uniformly continuous.  Choose~$r = r(b,\e)$ sufficiently small that for all~$z \in B(x,r)$, we have both~$\|x-z\|<\e^2$ and for all~$x\in {\mathbb R}^N$, $|b(x) - b(z)| < \e$. Fix~$z \in  B(x,r)$.
Then for all~$x\in {\mathbb R}^N$,
$$
[b, R_j]f(x)-([b, R_j]f)_{B(x,r)}
=\frac1{\omega(B(x,r))}\int_{B(x,r)} \big([b, R_j]f(x)-[b, R_j]f(z)\big)d\omega(z).
$$
Note that 
\begin{align*}
&[b, R_j]f(x)-[b, R_j]f(z)\\
&\quad=\int_{{\mathbb R}^N} R_j(x, y)[b(x)-b(y)] f(y)\,d\omega(y)
-\int_{{\mathbb R}^N} R_j(z, y)[b(z)-b(y)] f(y)\,d\omega(y)\\
&\quad=\int_{d(x,y)>\e^{-1} \|x-z\|}R_j(x, y)[b(x)-b(z)]f(y)\,d\omega(y)\\
&\quad\quad+\int_{d(x,y)>\e^{-1} \|x-z\|}[R_j(x, y)-R_j(z,y)][b(z)-b(y)]f(y)\,d\omega(y)\\
&\quad\quad+\int_{d(x,y)\le \e^{-1} \|x-z\|}R_j(x,y)[b(x)-b(y)]f(y)\,d\omega(y)\\
&\quad\quad-\int_{d(x,y)\le \e^{-1} \|x-z\|}R_j(z,y)[b(z)-b(y)]f(y)\,d\omega(y)\\
&\quad=:\sum_{i=1}^4{\rm L}_i(x,z).
\end{align*}
We begin with estimating ${\rm L}_2$. Since $\e\in(0, 1/2)$, it follows that
\[d(x,y)>\e^{-1} \|x-z\| \Rightarrow \|x-z\|  < \frac{d(x,y)}{2}.\]
Thus we may apply the smoothness condition of the kernel~$R_j(x,y)$ (as in Theorem \ref{smooth r}), concluding that
 \[ |R_j(x,y) - R_j(z,y)| \leq \frac{\|x-z\|}{\|y-x \|}\frac{1}{\omega(B(x,d(x,y)))} \le \frac{\|x-z\|}{d(x,y)}\frac{1}{\omega(B(x,d(x,y)))}.\]
Using this inequality, together with the fact that~$b\in \Lambda_{d,0}({\mathbb R}^N)$,  we have
\begin{align*}
  |{\rm L}_2(x,z)| 
 & \le \|x-z\| \int_{d(x,y)>\e^{-1} \|x-z\|} \frac{|f(y)|}{d(x,y)\omega(B(x,d(x,y)))} \,d\omega(y)\\
  &\le \|x-z\| \sum_{j=0}^\infty  \int_{2^j\e^{-1} \|x-z\|<d(x,y)<2^{j+1}\e^{-1} \|x-z\|}
  \frac{|f(y)|}{d(x,y)\omega(B(x,d(x,y)))} \,d\omega(y)\\
 &\le \e \sum_{j=0}^\infty 2^{-j}\frac1{\omega(B(x,2^j\e^{-1} \|x-z\|))}
  \int_{d(x,y)<2^{j+1}\e^{-1} \|x-z\|} |f(y)|   \,d\omega(y)\\
 &\le C\e \sum_{j=0}^\infty 2^{-j}M_\omega(f)(x)\\
 &\le C\e M_\omega(f)(x),
\end{align*}
where $M_\omega$ is the Hardy--Littlewood maximal operator on $(\mathbb R^N, d, \omega)$.
Hence, 
$$\frac1{\omega(B(x,r))}\int_{B(x,r)} |{\rm L}_2(x,z)|d\omega(z)\le C\e M_\omega(f)(x).$$
This further gives
\begin{equation}\label{eq:thm2.36}
\begin{aligned}
&\int_{{\mathbb R}^N}\bigg|\frac1{\omega(B(x,r))}\int_{B(x,r)} |{\rm L}_2(x,z)|
               d\omega(z)\bigg|^p\,d\omega(x) \\
&\le C\e^p \int_{{\mathbb R}^N}|M_\omega(f)(x)|^p\,d\omega(x) \\
&\le C\e^p\|f\|^p_{L^p({\mathbb R}^N, d\omega)}.
\end{aligned}
\end{equation}
Turning to ${\rm L}_3$, by the size condition of the kernel~$R_j(x,y)$ (as in Theorem \ref{smooth r}) and the fact that $b\in \Lambda_{d,0}({\mathbb R}^N)$, we conclude that
\begin{align*}
  |{\rm L}_3(x,z)| 
   &\lesssim \int_{d(x,y)\le \e^{-1} \|x-z\|}  \frac{|f(y)|d(x,y)}{\omega(B(x,d(x,y)))}\,d\omega(y) \\
   &=\sum_{i=-\infty}^{-1} \int_{2^{i}\e^{-1} \|x-z\|<d(x,y)\le 2^{i+1}\e^{-1} \|x-z\|}  \frac{|f(y)|d(x,y)}{\omega(B(x,d(x,y)))}\,d\omega(y) \\
   &\lesssim \sum_{i=-\infty}^{-1} 2^{i+1}\e^{-1} \|x-z\|\frac1{\omega(B(x,2^{i}\e^{-1} \|x-z\|))}\int_{d(x,y)\le 2^{i+1}\e^{-1} \|x-z\|}  |f(y)|\,d\omega(y) \\
   &\lesssim \e^{-1} \|x-z\|M_\omega(f)(x)
\end{align*}
By our choice of~$z$, 
\begin{equation}\label{eq:thm2.37}
\int_{{\mathbb R}^N}\bigg|\frac1{\omega(B(x,r))}\int_{B(x,r)} |{\rm L}_3(x,z)|
               d\omega(z)\bigg|^p\,d\omega(x) 
\lesssim \e^p\|f\|^p_{L^p({\mathbb R}^N, d\omega)}.
\end{equation}
Note that $\|z-\sigma(y)\|\le \|x-\sigma(y)\|+\|x-z\|$ for $
\sigma\in G$ gives $d(z, y)\le d(x,y)+\|x-z\|$. We have $d(z,y)\le 2\e^{-1} \|x-z\|$ if $d(x,y)<\e^{-1} \|x-z\|$ and $0<\e<1/8$. 
Hence,
\begin{align*}
 |{\rm L}_4(x,z)| &\lesssim \int_{d(z,y)\le 2\e^{-1} \|x-z\|} |R_j(z,y)||b(z)-b(y)||f(y)|\,d\omega(y)\\
 &\lesssim \int_{d(x,y)\le 2\e^{-1} \|x-z\|}  \frac{|f(y)|d(z,y)}{\omega(B(z,d(z,y)))}\,d\omega(y) \\
   &=\sum_{i=-\infty}^{-1} \int_{2^{i}\e^{-1} \|x-z\|<d(z,y)\le 2^{i+1}\e^{-1} \|x-z\|}  \frac{|f(y)|d(z,y)}{\omega(B(z,d(z,y)))}\,d\omega(y) \\
   &\lesssim \sum_{i=-\infty}^{0} 2^{i+1}\e^{-1} \|x-z\|\frac1{\omega(B(z,2^{i}\e^{-1} \|x-z\|))}\int_{d(x,y)\le 2^{i+1}\e^{-1} \|x-z\|}  |f(y)|\,d\omega(y) \\
   &\lesssim \e^{-1} \|x-z\|M_\omega(f)(z)
\end{align*}
and then
$$\frac1{\omega(B(x,r))}\int_{B(x,r)} |{\rm L}_4(x,z)|d\omega(z)
\le\frac\e{\omega(B(x,r))}\int_{B(x,r)} M_\omega(f)(z)d\omega(z) \le C\e M_{non}(M_\omega(f))(x),$$
where $M_{non}$ is the non-central Hardy--Littlewood maximal operator on $(\mathbb R^N, \|\cdot\|, \omega)$.
This implies that
\begin{equation}\label{eq:thm2.38}
\int_{{\mathbb R}^N}\bigg|\frac1{\omega(B(x,r))}\int_{B(x,r)} |{\rm L}_4(x,z)|
               d\omega(z)\bigg|^p\,d\omega(x) 
\lesssim \e^p\|f\|^p_{L^p({\mathbb R}^N, d\omega)}.
\end{equation}

As the last step, we consider $\rm{L}_1$:
$$
  |{\rm L_1}(x,z)| 
   \leq |b(x)-b(z)|\sup_{t>0}\bigg|\int_{d(x,y)>t}R_j(x, y)f(y)\,d\omega(y) \bigg|.$$
Thanks to \cite[Theorem 1.3]{THL}, we choose $0<r<p$ such that 
$$\sup_{t>0}\bigg|\int_{d(x,y)>t}R_j(x, y)f(y)\,d\omega(y) \bigg|
\lesssim M(|R_j(f)|^r)(x)^{1/r}+\sum_{\sigma\in G}M(f(\sigma(x))).$$
Recall that~$|b(x) - b(z)| < \e$ by our choice of~$z$.
Hence
\begin{equation}\label{eq:thm2.39}
\begin{aligned}
 &\int_{{\mathbb R}^N}\bigg|\frac1{\omega(B(x,r))}\int_{B(x,r)} |{\rm L}_1(x,z)|
               d\omega(z)\bigg|^p\,d\omega(x) \\
  &\lesssim C_p\sum_{\sigma\in G}\int_{{\mathbb R}^N}  \e^p\Big( M(|R_j(f)|^r)(x)^{p/r} +M(f(\sigma(x)))^p\Big) \,d\omega(x) \\
   &\lesssim C_p\e^p \int_{{\mathbb R}^N}    \Big(  M(|R_j(f)|^r)(x)^{p/r} +M(f(\sigma(x)))^p\Big)\,d\omega(x) \\
  &\lesssim C_p\e^p\|f\|^p_{L^p({\mathbb R}^N, d\omega)}.
\end{aligned}
\end{equation}
Combining the estimates~\eqref{eq:thm2.36}--\eqref{eq:thm2.39} of ${\rm L}_i,\,i\in\{1, 2,3,4\}$, we conclude that
\begin{align*}
\int_{{\mathbb R}^N}\left|[b, R_j]f(x)-[b, R_j]f(z)\right|^p\,d\omega(x)
&\lesssim C_p\e^p\|f\|^p_{L^p({\mathbb R}^N, d\omega)}.
\end{align*}

\noindent This shows that $[b,R_j]E$ satisfies hypothesis (c) in Theorem \ref{t-fre kol}. Hence,
$[b, R_j]$ is a compact operator.

\bigskip
\noindent
{\bf Necessity}:\\

We start by assuming $b\in {\rm BMO}_{Dunkl}(\mathbb R^N)$ is such that $[b,R_k]$ is compact from $L^{p}(\mathbb R^N, d\omega)$ to $L^{p}(\mathbb R^N, d\omega)$. We will use the method of proof by contradiction and  hence let us suppose that $b\notin {\rm VMO}_{Dunkl}(\mathbb R^N)$. Here we follow the main idea from \cite{LL}.

As we assume that  $b\notin {\rm VMO}_{Dunkl}(\mathbb R^N)$, at least one of the three conditions presented in the definition of ${\rm VMO}_{Dunkl}(\mathbb R^N)$ fails to hold. Since a similar argument will work for all three conditions, let us suppose that the first condition does not hold.

That is, there exists some $\delta_0>0$ and a sequence of balls $\{Q_i\}_{i\in I} \subset \mathbb R^N$ such that $l(Q_i) \rightarrow 0$ as $i \rightarrow \infty$ and we have that
\begin{equation}\label{seqvmo1}
{1\over \omega(Q)}\int_{Q}\left|b(x)-b_{Q}\right|d\omega(x) \geq \delta_0.
\end{equation}
We will also further assume without loss of generality that
\begin{equation}\label{lengthcon1}
4l(Q_{j_{i+1}})\leq l(Q_{j_{i}}).
\end{equation}
Note that 
$$R_k(x,y)=-c\int_0^\infty \frac{y_k-x_k}t h_t(x,y)\frac{dt}{\sqrt t}$$
and $ h_t(x,y)$ has lower bounded $\frac 1{\min\{\omega(B(x,\sqrt t)), \omega(B(y,\sqrt t))\}}e^{-c\|x-y\|^2/t}$.
We choose $\widetilde Q_j=Q_j(\tilde x_0, r)$ such that $y_k-x_k\ge r$ and $\|x-y\|\approx r$ for $x\in Q_j$  and $y\in \widetilde Q_j$.
Let us denote by $m_{b}(\tilde{Q}_j)$ a median value of $b$ on the ball $\tilde{Q}_j$. That is $m_{b}(\tilde{Q}_j)$ is a real number such that the two sets below have a measure at least $\frac{1}{2}\omega(\tilde{Q}_j)$
\begin{equation}
    F_{j,1} \subset \{y\in \tilde{Q}_j: b(y) \leq m_{b}(\tilde{Q}_j)\},\quad  F_{j,2} \subset \{y\in \tilde{Q}_j: b(y) \geq m_{b}(\tilde{Q}_j)\}.
\end{equation}

Also define the sets
\begin{equation}
    E_{j,1} \subset \{x\in {Q}_j: b(x) \geq m_{b}(\tilde{Q}_j)\},\quad  E_{j,2} \subset \{x\in {Q}_j: b(x) < m_{b}(\tilde{Q}_j)\}.
\end{equation}
So we have that $Q_j = E_{j,1}\cup E_{j,2}$ and $E_{j,1}\cap E_{j,2} = \emptyset$ and we also have the following
\begin{align*}
    b(x) - b(y) \geq 0,& \quad (x,y) \in E_{j,1}\times F_{j,1}\\
    b(x) - b(y) < 0,& \quad (x,y) \in E_{j,2}\times F_{j,2}.
\end{align*}
For $(x,y) \in  E_{j,1}\times F_{j,1}\cup  E_{j,1}\times F_{j,1}$, we have that
\begin{equation*}
    |b(x)-b(y)| = |b(x)-m_{b}(\tilde{Q}_j)| + |m_{b}(\tilde{Q}_j) - b(y)| \geq |b(x)-m_{b}(\tilde{Q}_j)|.
\end{equation*}

Define the following sets
\begin{equation}
 \tilde F_{j,1} := F_{j,1}\setminus \cup_{l= j+1}^{\infty}\tilde{Q_l} \quad \tilde F_{j,2}  := F_{j,2}\setminus \cup_{l= j+1}^{\infty}\tilde{Q_l} \quad \forall j = 1,2,\ldots .
\end{equation}
Now using the decay condition for the lengths of $\{Q_j\}$ as given by \eqref{lengthcon1}, we have for each $j$ the following
\begin{equation}\label{setmeasure}
\omega(\tilde F_{j,1}) \geq \omega(F_{j,1}) - \omega(\cup_{l= j+1}^{\infty}\tilde{Q_l})\geq \frac{1}{2} \omega(\tilde{Q_j})- \sum_{l= j+1}^{\infty}\omega(\tilde{Q_l}) \geq \frac{1}{2} \omega(\tilde{Q_j}) - \frac{1}{3} \omega(\tilde{Q_j}) = \frac{1}{6} \omega(\tilde{Q_j}).
\end{equation}

We can obtain a similar estimate for the set $\tilde F_{j,2}$. Observe now for every $j$, the following holds
\begin{align}
 &{1\over \omega(Q_j)}\int_{Q_j}\left|b(x)-b_{Q}\right|d\omega(x) \leq {2\over \omega(Q_j)}\int_{Q_j}\left|b(x)-m_{b}(\tilde{Q}_j)\right|d\omega(x) \\
 &= {2\over \omega(Q_j)}\int_{E_{j,1}}\left|b(x)-m_{b}(\tilde{Q}_j)\right|d\omega(x) + {2\over \omega(Q_j)}\int_{E_{j,2}}\left|b(x)-m_{b}(\tilde{Q}_j)\right|d\omega(x)\nonumber.
\end{align}

From \eqref{seqvmo1} we have that at least one of these inequalities holding
\begin{equation*}
 {2\over \omega(Q_j)}\int_{E_{j,1}}\left|b(x)-m_{b}(\tilde{Q}_j)\right|d\omega(x) \geq \frac{\delta_0}{2},  \quad {2\over \omega(Q_j)}\int_{E_{j,2}}\left|b(x)-m_{b}(\tilde{Q}_j)\right|d\omega(x) \geq \frac{\delta_0}{2}.
\end{equation*}
Let us suppose that the first of these inequalities holds, i.e.,
\begin{equation*}
 {2\over \omega(Q_j)}\int_{E_{j,1}}\left|b(x)-m_{b}(\tilde{Q}_j)\right|d\omega(x) \geq \frac{\delta_0}{2}.
\end{equation*}
Hence for every $j$, using \eqref{setmeasure} we have that
\begin{align}
    &\frac{\delta_0}{4} \leq {1\over \omega(Q_j)}\int_{E_{j,1}}\left|b(x)-m_{b}(\tilde{Q}_j)\right|d\omega(x) \lesssim {1\over \omega(Q_j)}\frac{\omega(\tilde F_{j,1})}{\omega(Q_j)}\int_{E_{j,1}}\left|b(x)-m_{b}(\tilde{Q}_j)\right|d\omega(x)\\
    &\lesssim {1\over \omega(Q_j)}\int_{E_{j,1}}\int_{\tilde F_{j,1}}\frac{1}{\omega(Q_j)}\left|b(x)-m_{b}(\tilde{Q}_j)\right|d\omega(y)d\omega(x).\nonumber
\end{align}
Hence,
\begin{align}
&\delta_0 \lesssim   {1\over \omega(Q_j)}\int_{E_{j,1}}\left|\int_{\tilde F_{j,1}}\frac{1}{\omega(Q_j)}\Big(b(x)-m_{b}(\tilde{Q}_j)\Big)d\omega(y)\right|d\omega(x)\\
&\lesssim {1\over {\omega(Q_j)^{1\over p}\omega(Q_j)^{1\over p'}}}\int_{E_{j,1}}|[b,R_k](\chi_{\tilde F_{j,1}})(x)|d\omega(x)\nonumber\\
& = {1\over \omega(Q_j)^{1\over p'}}\int_{E_{j,1}}\bigg|[b,R_k]\bigg({\chi_{\tilde F_{j,1}}\over \omega(Q_j)^{1\over p}}\bigg)(x)\bigg|d\omega(x).\nonumber
\end{align}

Consider $f_j =: {\chi_{\tilde F_{j,1}}\over \omega(Q_j)^{1\over p}}$, observe that this is a sequence of disjointly supported functions and using \eqref{setmeasure} also satisfy $\|f_j\|_{L^{p}(\mathbb R^N, d\omega)} \simeq 1$. Using the H\"older's inequality yields

\begin{align}
  \delta_0 &\lesssim {1\over \omega(Q_j)^{1\over p'}}\int_{E_{j,1}}|[b,R_k](f_j)(x)| d\omega(x)\\
 &\lesssim {1\over \omega(Q_j)^{1\over p'}}\omega(E_{j,1})^{1\over p'}\bigg(\int_{\mathbb R^N}|[b,R_k](f_j)(x)|^{p}  d\omega(x)\bigg)^{1 \over p}\nonumber\\
 &\lesssim \bigg(\int_{\mathbb R^N}|[b,R_k](f_j)(x)|^{p} d\omega(x)\bigg)^{1 \over p}.\nonumber
\end{align}

Let us consider $\psi$ in the closure of $\{[b,R_k](f_j)\}_j$, then we have $\|\psi\|_{L^{p}(\mathbb R^N, d\omega)}\gtrsim 1$. Now choose some $j_i$ such that

\begin{equation*}
  \|\psi-[b,R_k](f_{j_i})\|_{L^{p}(\mathbb R^N, d\omega)} \leq 2^{-i}.
\end{equation*}

To complete the proof consider a non-negative numerical sequence $\{c_i\}$ with $\|\{c_i\}\|_{l^{p'}}<\infty$ but  $\|\{c_i\}\|_{l^1}=\infty$. Then consider $\phi = \sum_{i}c_if_{j_i}\in L^{p}(\mathbb R^N, d\omega)$ and

\begin{align}
   & \left\|\sum_i c_i\psi-[b,R_k]\phi\right\|_{L^{p}(\mathbb R^N, d\omega)} \leq \left\|\sum_{i}^{\infty}c_i(\psi-[b,R_k](f_{j_i}))\right\|_{L^{p}(\mathbb R^N, d\omega)}\\
   &\leq \|c_i\|_{l^{p'}} \bigg[\sum_i \|\psi-[b,R_k](f_{j_i})\|_{L^{p}(\mathbb R^N, d\omega)}^{p}\bigg]^{1\over p}\lesssim 1.\nonumber
\end{align}
Hence we conclude that $\sum_ic_i\psi\in L^{p}(\mathbb R^N, d\omega)$, but $\sum_ic_i\psi$ is infinite on set of positive measure which is contradiction that completes our proof.


%
%
%

   \bigskip
\noindent   
{\bf Acknowledgement: } 

Ji Li would like to thank Jorge Betancor for helpful discussions.
   
   \bigskip

\bigskip
\bigskip

\medskip
\vskip 0.5cm

\noindent Department of Mathematics, Auburn University, AL
36849-5310, USA.

\noindent {\it E-mail address}: \texttt{hanyong@auburn.edu}

\medskip
\vskip 0.5cm

\noindent Department of Mathematics, National Central University Chung-Li 320, Taiwan Republic of China

\noindent {\it E-mail address}: \texttt{mylee@math.ncu.edu.tw}

\medskip
\vskip 0.5cm
\noindent School of Mathematical and Physical Sciences, Macquarie University, NSW, 2109, Australia.

\noindent {\it E-mail address}: \texttt{ji.li@mq.edu.au}

\medskip
\vskip 0.5cm

\noindent Department of Mathematics
Washington University - St. Louis, St. Louis, MO 63130-4899 USA

\noindent {\it E-mail address}: \texttt{bwick@wustl.edu}


\begin{thebibliography}{CDLW}
\bibitem[A]{A} L. V. Ahlfors, {\it Bounded analytic functions}, Duke Math. J., {\bf14} (1947), 1--11.


\bibitem[ADH]{ADH} J.-Ph. Anker, J. Dziuba\'nski, A. Hejna, {\it Harmonic functions, conjugate harmonic functions and the Hardy space $H^1$ in the rational Dunkl setting}, Journal of Fourier Analysis and Applications, {\bf 25 } (2019),2356--2418.

\bibitem[AGS]{AGS} B. Amri, A. Gasmi and M. Sifi, {\it Linear and bilinear multiplier operators for the Dunkl transform},  Mediterranean Journal of Mathematics, {\bf 7} (2010), 503--521.

\bibitem[AS]{AS} B. Amri and M. Sifi, {\it Riesz transforms for Dunkl transform},  Ann. Math. Blaise Pascal, {\bf 19} (2012), 247--262.


\bibitem[BCV]{BCV} J. Betancor, O. Ciaurri, J. Varona, The multiplier of the interval $[-1,1]$ for the Dunkl transform on the real line, J. Funct. Anal., {\bf242} (2007), no. 1, 327--336.

\bibitem[BCG]{BCG} M. Barlow, T. Coulhon and A. Grigor'yan, {\it Manifolds and graphs with slow heat kernel decay}, Invent. Math., {\bf144}, (2001), 609--649.

\bibitem[Bre10]{Bre10} {H. Brezis},
\emph{Functional analysis, Sobolev spaces, and partial differential equations}, Springer New York, New York, NY, 2010.

\bibitem[CDLW]{CDLW}  P. Chen, X.T. Duong, J. Li and Q. Wu,
{\it Compactness of Riesz transform commutator on stratified Lie groups}. J. Funct. Anal.,
\textbf{277} (2019), 1639–1676.

 \bibitem[CRW]{CRW}  R. Coifman, R. Rochberg and G.  Weiss,
{\it Factorization theorems for Hardy spaces
 in several variables}. Ann. Math.,
\textbf{103} (1976), 611--635.

\bibitem[CW]{CW} R.R. Coifman and G. Weiss,  Extensions
    of Hardy spaces and their use in analysis, Bull. Amer.
Math. Soc., {\bf 83} (1977), 569--645.


\bibitem[DW]{DW} F. Dai and H. Wang, {\it A transference theorem for the Dunkl transform and its applications}, J. Funct. Anal., {\bf258} (2010), no. 12, 4052--4074.

 \bibitem[deJ]{deJ} M. F. E. de Jeu, {\it The Dunkl transform}, Invent. Math.,
 {\bf 113} (1993), 147--162.

    \bibitem[D1]{Du1}
C.\,F. Dunkl, {\it Differential-difference operators associated to
reflection groups},  Trans. Amer. Math. Soc., {\bf 311} (1989),
167--183.

    \bibitem[D2]{Du2}
C.\,F. Dunkl, {\it Hankel transforms associated to finite
reflection groups}, in Hypergeometric Functions on Domains of Positivity, Jack Polynomials, and Applications, Tampa F.L. 1991, Contemporary Mathematics, {\bf 138}, American Mathematical Society, Providence, RI, \rm(1992),
123--138.


\bibitem[Dz]{Dz} J. Dziuba\'nski, {\it Riesz transforms characterizations of Hardy spaces $H^1$ for the rational Dunkl setting and multidimensional Bessel operators}, J. Geom. Anal., {\bf26} (2016), no. 4, 2639--2663.

 \bibitem[DH1]{DH2} J. Dziuba\'nski and  A. Hejna, {\it Remark on atomic decompositions for Hardy space $H^1$ in the rational Dunkl setting}, Studia Math.,
 {\bf 251} (2020), 89--110.

 \bibitem[DH2]{DH4}  J. Dziuba\'nski and A. Hejna, {\it Singular integrals in the rational Dunkl setting}, Revista Matematica
Complutense, (2021) DOI10.1007/s13163-021-00402-1.

\bibitem[GM]{GM} P. G\'orka and A. Macios, {\it The Riesz-Kolmogorov theorem on metric spaces}, 
Miskolc Math. Notes {\bf 15} (2014), no. 2, 459–465.

\bibitem[HLW]{HLW} I. Holmes, M. Lacey and B.D. Wick, {\it Commutators in the two-weight setting}, { Math. Ann.}, {\bf 367 } (2017),  51--80.


\bibitem[H1]{H} T.  Hyt\"onen, {\it The sharp weighted bound for general Calder\'{o}n--Zygmund operators},  Ann. of Math., {\bf 175} (2012),  1473--1506.

\bibitem[H2]{H2} T.  Hyt\"onen,
{\it The $L^p$-to-$L^q$ boundedness of commutators with applications to the Jacobian operator},  J. Math. Pures Appl. (9) {\bf156} (2021), 351--391. 


  
\bibitem[HRT]{HRT} T. Hyt\"onen, L. Roncal and O. Tapiola, Quantitative weighted estimates for rough homogeneous singular integrals, {\it Israel J. Math}., {\bf 218} (2017),  133--164.
  


\bibitem[J]{J2} J.L. Journ\'{e}, {\it Calder\'{o}n-Zygmund operators on product
space}, Rev. Mat. Iberoam., {\bf 1} (1985), 55--92.

\bibitem[JL]{JL} J. Jiu and Z. Li, {\it The dual of the Hardy space associated with the Dunkl operators}, Adv. in Math. {\bf 412} (2023).

\bibitem[La]{La}
     M.T. Lacey, {\it An elementary proof of the $A_{2}$ bound},  Israel J. Math., {\bf 217} (2017), no. 1, 181--195.

\bibitem[LL]{LL} M.T. Lacey and J. Li,
Compactness of commutator of Riesz transforms in the two weight setting,
 J. Math. Anal. Appl., {\bf508} (2022), no. 1, Paper No. 125869.

\bibitem[LS]{LS} E. Lenzmanna and A. Schikorra, {\it Sharp commutator estimates via harmonic extensions},
Nonlinear Analysis, {\bf193} (2020), 111375.

\bibitem[Le]{Lerner2}
A.K. Lerner, {\it A simple proof of the $A_{2}$ conjecture},  Int. Math. Res. Not. IMRN., (2013), 3159--3170.

\bibitem[LOR]{LOR} A.K. Lerner, S. Ombrosi, I.P. Rivera-R\'ios, {\it On pointwise and weighted estimates for commutators of Calder\'on--Zygmund operators}, Adv. Math., {\bf319} (2017), 153--181.


\bibitem[M]{M} Y. Meyer, Wavelets and operators Cambridge studies in advanced mathematics 37, Cambridge University Press 1992.

\bibitem[R1]{R1} M. R\"osler, {\it Positivity of Dunkl intertwining operator}, Duke Math. J., {\bf98} (1999) 445--463.

\bibitem[R2]{R2} M. R\"osler, Dunkl operators: theory and applications, Orthoganal polynomials and special functions(Leuven, 2002), Lecture Notes in Math. vol. 1817, Springer, Berlin, 2003, pp. 93--135.

\bibitem[R3]{R3} M. R\"osler, {\it A positive radial product formula for the Dunkl kernel}, {Trans. Amer. Math. Soc.}, {\bf355} (2003) 2413--2438.

\bibitem [THL]{THL} C. Tan,  Y, Han and J. Li, {\it Maximal Operator, Cotlar’s Inequality and Pointwise Convergence for Singular Integral Operators in Dunkl Setting}, {J. Geom. Anal.}, (2023) {\bf 33:164}.

\bibitem[TX1]{TX1} S. Thangvelu and Y. Xu, Convolution operator and maximal function for the Dunkl transform, \textit{J. Anal. Math.}, {\bf97} (2005) 25--55.


\bibitem[TX2]{TX}  S. Thangavelu and Y. Xu, {\it Riesz transforms and Riesz potentials for dunkl transform }, J. Comp. and Appl. Math., {\bf 199} (2007), 181--195.

\bibitem[Uch78]{Uch78}  A. Uchiyama,
On the compactness of operators of Hankel type,
{\it T\^{o}hoku Mathematical Journal} \textbf{30} (1978), 163--171.

\bibitem[Yos80]{Yos80} K. Yosida,
\emph{Functional analysis}, $6^{\textup{th}}$ edition, Vol. 123, Grundlehren der mathematischen Wissenschaften, Springer-Verlag, Berlin Heidelberg-New York,1980.

\end{thebibliography}
\end{document}